\numberwithin{equation}{section}
\theoremstyle{plain}
\newtheorem{thm}{Theorem}[section]
\newtheorem{lem}[thm]{Lemma}
\newtheorem{prop}[thm]{Proposition}
\newtheorem{cor}[thm]{Corollary}
\theoremstyle{definition}
\newtheorem{defn}[thm]{Definition}
\newtheorem{ex}[thm]{Example}
\newtheorem{rem}[thm]{Remark}
\newcommand{\N}{\mathbb{N}}
\newcommand{\Z}{\mathbb{Z}}
\newcommand{\Q}{\mathbb{Q}}
\newcommand{\R}{\mathbb{R}}
\newcommand{\C}{\mathbb{C}}
\newcommand{\eps}{\varepsilon}
\newcommand{\bl}{{\boldsymbol{l}}}
\newcommand{\bp}{{\boldsymbol{p}}}
\newcommand{\bq}{{\boldsymbol{q}}}
\newcommand{\bs}{{\boldsymbol{s}}}
\newcommand{\bt}{{\boldsymbol{t}}}
\newcommand{\bi}{{\boldsymbol{i}}}
\newcommand{\bj}{{\boldsymbol{j}}}
\newcommand{\frH}{\mathfrak{H}}
\newcommand{\astb}{\mathbin{\underline{*}}}
\newcommand{\bx}{{\boldsymbol{x}}}
\newcommand{\by}{{\boldsymbol{y}}}
\DeclareMathOperator{\Hom}{Hom}
\DeclareMathOperator{\Alg}{Alg}
\newcommand{\abs}[1]{\lvert #1 \rvert}
\title{Interpolant of Truncated Multiple Zeta Functions}
\author{Kentaro Ihara \and Yayoi Nakamura \and Shuji Yamamoto}
\subjclass[2020]{Primary 11M32, Secondary 16T05}
\keywords{Multiple zeta function; Harmonic product}
\thanks{This research was supported by JSPS KAKENHI JP18H05233, JP18K03221 and JP21K03185. }
\address[K.~Ihara]{
    Department of Mathematics, Faculty of Science and Engineering, Kindai University, 
    3-4-1 Kowakae, Higashi-Osaka, Osaka 577-8502, Japan
}
\email{k-ihara@math.kindai.ac.jp}
\address[Y.~Nakamura]{
    Department of Mathematics, Faculty of Science and Engineering, Kindai University, 
    3-4-1 Kowakae, Higashi-Osaka, Osaka 577-8502, Japan
}
\email{yayoi@math.kindai.ac.jp}
\address[S.~Yamamoto]{
    Department of Mathematics, Faculty of Science and Technology, Keio University, 
    3-14-1 Hiyoshi, Kouhoku-ku, Yokohama 223-8522, Japan
}
\email{yamashu@math.keio.ac.jp}
\begin{document}

\begin{abstract}
We introduce an analytic function $\Psi(s_1,\ldots,s_r;w)$ that interpolates truncated 
multiple zeta functions $\zeta_N(s_1,\ldots,s_r)$. 
We represent this interpolant as a Mellin transform of a function $G(q_1,\ldots,q_r;w)$ and, 
using this expression, give the analytic continuation. 
Further, the harmonic product relations for $\Psi$ and $G$ are established via relevant Hopf algebra structures, 
and some properties of the function $G$ are provided. 
\end{abstract}

\maketitle

\section{Introduction}
For a tuple of complex variables $\bs=(s_1,\ldots,s_d)$, 
let $\zeta(\bs)$ denote the \emph{multiple zeta function} (of Euler--Zagier type) defined by 
\[\zeta(\bs)\coloneqq\sum_{j_1>\cdots>j_d>0}\frac{1}{j_1^{s_1}\cdots j_d^{s_d}}. \]
It is known that this series converges and gives a holomorphic function on the region 
\[\Re(s_1+\cdots+s_m)>m \quad (m=1,\ldots,d),\]
and this function continues meromorphically to the whole space $\C^d$ \cite{AET,Ma,Zh}. 
Moreover, the multiple zeta functions satisfy the harmonic (or stuffle) product formula, 
the simplest example of which is 
\[\zeta(s_1)\zeta(s_2)=\zeta(s_1,s_2)+\zeta(s_2,s_1)+\zeta(s_1+s_2). \]
In general, the harmonic product formula expresses the product of two multiple zeta functions 
as a sum of multiple zeta functions (see \S\ref{sec:HP} for the precise formulation). 
In fact, for any $N\in\N_0$, the same formula holds for the truncated sums 
\[\zeta_N(\bs)\coloneqq\sum_{N\ge j_1>\cdots>j_d>0}\frac{1}{j_1^{s_1}\cdots j_d^{s_d}}\]
(here and throughout this paper, $\N$ stands for the set of positive integers and $\N_0=\N\cup\{0\}$). 
In this paper, we investigate the analytic function $\Psi(\bs;w)$ which interpolates these truncated sums, 
i.e., satisfies $\Psi(\bs;N)=\zeta_N(\bs)$ for all $N\in\N_0$. 

For positive integer points $\bl=(l_1,\ldots,l_d)\in\N^d$, 
several authors have studied analytic functions interpolating $\zeta_N(\bl)$ for $N\in\N_0$.  
By studying such functions, one can deduce interesting consequences on the multiple zeta \emph{values} 
(that is, the values $\zeta(\bl)$ at $\bl\in\N^d$ with $l_1\ge 2$). 
For example, Kawashima \cite{Ka} and Zlobin \cite{Zl} had introduced 
the interpolating functions of the truncated multiple zeta-star values 
\[\zeta^\star_N(\bl)\coloneqq\sum_{N\ge j_1\ge\cdots\ge j_d>0}\frac{1}{j_1^{l_1}\cdots j_d^{l_d}}\]
to imply $\Q$-linear relations among multiple zeta values. 
The main result in \cite{Ka} provides a wide class of relations, called the Kawashima relation, 
which includes some of fundamental linear relations such as 
the duality relation \cite[\S9]{Za} and, more generally, the Ohno relation \cite[Theorem 1]{O}. 

The second author and Kusunoki \cite{KN} had constructed a function $\Psi(\bl;w)$ 
interpolating the multiple harmonic sums $\zeta_N(\bl)$. 
This is defined inductively by 
\begin{equation}\label{eq:Psi_l}
\Psi(l_1,\ldots,l_d;w)\coloneqq \sum_{j=1}^\infty
\Biggl\{\frac{\Psi(l_2,\ldots,l_d;j-1)}{j^{l_1}}-\frac{\Psi(l_2,\ldots,l_d;w+j-1)}{(w+j)^{l_1}}\Biggr\} 
\end{equation}
with the initial value $\Psi(\varnothing;w)\coloneqq 1$ for $d=0$. 
When $d=1$, $\Psi(l;w)$ reduces to the classical polygamma function. 
For $d\ge 1$, $\Psi(\bl;w)$ is a meromorphic function on $w\in\C$ with poles at $w=-1,-2,-3,\cdots$ 
and with obvious zeros at $w=0,1,\ldots,d-1$. 
By the interpolation property $\Psi(\bl;N)=\zeta_N(\bl)$ for $N\in\N_0$, we clearly have 
\begin{equation}\label{eq:lim Psi_l(N)}
\lim_{N\to\infty} \Psi(\bl;N)=\zeta(\bl)
\end{equation}
for admissible $\bl$ (i.e., $l_1\ge 2$). 
In fact, it holds that $\lim_{w\to\infty}\Psi(\bl;w)=\zeta(\bl)$ if $\lvert\arg w\rvert<\pi$. 
See \cite{KNS,KN} for details and applications of $\Psi(\bl;w)$. 

In this paper, we study a function $\Psi(\bs;w)$ of 
\emph{complex variables} $\bs=(s_1,\ldots,s_d)$ and $w$, which extends the above $\Psi(\bl;w)$ 
and satisfies the interpolation property $\Psi(\bs;N)=\zeta_N(\bs)$ for $N\in\N_0$. 
The content of each section is as follows: 
In \S2, the function $\Psi(\bs;w)$ is introduced and an expression in terms of 
the multiple zeta(-star) function is given (Proposition \ref{prop:Psi Hurwitz}). 
In \S3, a holomorphic function $G(\bq;w)$ on a certain region $U_d\times\C$ is introduced. 
We show some properties of $G(\bq;w)$, which will be used later. 
In \S4, we prove the main result of this paper, namely, the Mellin integral expression of $\Psi(\bs;w)$: 
\[
\Psi(\bs;w)=\frac{1}{\prod_{m=1}^d\Gamma(s_m)}
\int_0^1\cdots\int_0^1 G(\bq;w)\prod_{m=1}^d(-\log q_m)^{s_m-1}\,dq_1\cdots dq_d 
\]
(see Theorem \ref{thm:Psi integral} for the precise statement). 
As a corollary, the analytic continuation of $\Psi(\bs;w)$ is obtained. 
In \S5, the harmonic product formulas for $\Psi(\bs;w)$ and $G(\bq;w)$ are discussed. 
By using certain Hopf algebras, we establish these formulas in a parallel way. 
In \S6, we show miscellaneous results on the function $G(\bq;w)$. 

Some of the results in this paper were obtained in the years starting in 2017 
as a joint work by the 1st and 2nd authors. 
When the 2nd author gave a talk on these results at RIMS in 2019, 
Komori gave comments and papers \cite{Ko, ko1} on his recent research. 
It turned out that he also independently introduced an interpolation function 
for multiple zeta functions and proved a special case 
of the integral representation we gave in this paper. 

\subsection{Notation}\label{subsec:Notation}

Let $X$ be a set.  
Throughout the paper, we use the following notation: 
For $x_0\in X$, $\bx=(x_1,\ldots,x_d)\in X^d$, 
$\bx'=(x'_1,\ldots,x'_{d'})\in X^{d'}$, and $0\le m\le d$, 
\begin{align*}
(x_0,\bx)&\coloneqq (x_0,x_1,\ldots,x_d)\in X^{d+1}, \\
(\bx,\bx')&\coloneqq (x_1,\ldots,x_d,x'_1,\ldots,x'_{d'})\in X^{d+d'}, \\
\bx_{[m]}&\coloneqq (x_1,\ldots,x_m)\in X^m, \\
\bx^{[m]}&\coloneqq (x_{m+1},\ldots,x_d)\in X^{d-m}. 
\end{align*}

In particular, we have $\bx_{[0]}=\bx^{[d]}=\varnothing$, 
where $\varnothing$ denotes the empty tuple. 
In typical cases, we will use the notation $\bs=(s_1,\ldots,s_d)\in\C^d$ for $X=\C$ and 
$\bq=(q_1,\ldots,q_d)\in\C^d$ for $X=\C$ or $X=\R_{>0}$

For a complex number $s$, the power function $w^s$ is considered on the cut complex plane 
$\C\setminus\R_{\le 0}$ by using the branch of logarithm with $-\pi<\Im\log w<\pi$.

\section{Interpolant $\Psi(\bs;w)$}

For integers $d\ge 0$, we define the function $\Psi^d(\bs;w)$ of $d+1$ complex variables 
$\bs=(s_1,\ldots,s_d)$ and $w$ inductively by 
\begin{align}
\Psi^0(\varnothing;w)&\coloneqq 1,\notag\\
\Psi^d(\bs;w)&\coloneqq\sum_{j=1}^\infty
\biggl\{\frac{\Psi^{d-1}(\bs^{[1]};j-1)}{j^{s_1}}-\frac{\Psi^{d-1}(\bs^{[1]};w+j-1)}{(w+j)^{s_1}}\biggr\} 
\qquad(d>0). \label{eq:Psi(s;w) defn}
\end{align}
We let the variable $w$ vary in the domain $\C\setminus\R_{\le -1}$, 
so that the powers $(w+j)^s$ are well-defined following the convention in \S\ref{subsec:Notation}. 
In the following, we often omit the superscript $d$ from $\Psi^d$. 

We also consider the multiple zeta-star function of Hurwitz type 
\[\zeta^\star(\bs;w)\coloneqq \sum_{j_1\ge\cdots\ge j_d>0}
\frac{1}{(w+j_1)^{s_1}\cdots(w+j_d)^{s_d}}, \]
which absolutely converges (at least) if $\Re(s_1),\ldots,\Re(s_d)>1$ and $w\notin\R_{\le -1}$. 
For $d=0$, we set $\zeta^\star(\varnothing;w)\coloneqq 1$. 

\begin{prop}\label{prop:Psi Hurwitz}
If $\Re(s_1),\ldots,\Re(s_d)>1$ and $w\notin\R_{\le -1}$, 
the series \eqref{eq:Psi(s;w) defn} absolutely converges and the identity 
\begin{equation}\label{eq:Psi Hurwitz}
\Psi(\bs;w)=\sum_{i=0}^d(-1)^i\zeta^\star(s_i,\ldots,s_1;w)\,\zeta(s_{i+1},\ldots,s_d) 
\end{equation}
holds. Moreover, for any $N\in\N_0$, we have the interpolation property 
\[\Psi(\bs;N)=\zeta_N(\bs). \]
\end{prop}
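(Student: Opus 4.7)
I will prove all three assertions simultaneously by induction on $d$. The base case $d=0$ is immediate: $\Psi(\varnothing;w)=1$, the right-hand side of \eqref{eq:Psi Hurwitz} reduces to $\zeta^\star(\varnothing;w)\,\zeta(\varnothing)=1$, and $\zeta_N(\varnothing)=1$. For the inductive step I assume the proposition for $d-1$, so that $\Psi(\bs^{[1]};u)$ is defined on $\C\setminus\R_{\le -1}$, admits the $d-1$ Hurwitz expansion, and satisfies $\Psi(\bs^{[1]};N)=\zeta_N(\bs^{[1]})$.

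For absolute convergence of \eqref{eq:Psi(s;w) defn}, the $d-1$ Hurwitz expansion together with the routine estimate $\zeta^\star(a_1,\ldots,a_k;u)=O(|u|^{1-\Re(a_1)})$ as $|u|\to\infty$ away from the cut yields $\Psi(\bs^{[1]};u)=\zeta(\bs^{[1]})+O(|u|^{1-\Re(s_2)})$. The $j$th summand is then bounded by $|\zeta(\bs^{[1]})|\,|j^{-s_1}-(w+j)^{-s_1}|$ plus error terms, which decay like $O(|w|\,j^{-\Re(s_1)-1})+O(j^{-\Re(s_1)-\Re(s_2)+1})$ and are summable for $\Re(s_m)>1$. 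Since in fact $\Re(s_1)>1$ and $\Psi(\bs^{[1]};\cdot)$ is bounded, the two series $\sum_j\Psi(\bs^{[1]};j-1)/j^{s_1}$ and $\sum_j\Psi(\bs^{[1]};w+j-1)/(w+j)^{s_1}$ already converge absolutely on their own, so \eqref{eq:Psi(s;w) defn} decomposes as their difference.

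The first piece is evaluated using the inductive interpolation:
\[
\sum_{j=1}^\infty \frac{\Psi(\bs^{[1]};j-1)}{j^{s_1}} = \sum_{j=1}^\infty \frac{\zeta_{j-1}(\bs^{[1]})}{j^{s_1}} = \zeta(\bs),
\]
the last step by regrouping over $j=j_1>j_2>\cdots>j_d>0$. For the second piece, I substitute the $d-1$ Hurwitz expansion for $\Psi(\bs^{[1]};w+j-1)$, interchange summations, and apply the elementary rearrangement
\[
\zeta^\star(a_1,\ldots,a_k;w) = \sum_{l=1}^\infty \frac{\zeta^\star(a_1,\ldots,a_{k-1};w+l-1)}{(w+l)^{a_k}}
\]
obtained by stripping off the innermost summation index in the definition of $\zeta^\star$. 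After relabeling $i=i'+1$, the second piece becomes $\zeta(\bs)-F(\bs;w)$, where $F(\bs;w)$ denotes the right-hand side of \eqref{eq:Psi Hurwitz}, and subtracting gives $\Psi(\bs;w)=F(\bs;w)$. Separately, the interpolation $\Psi(\bs;N)=\zeta_N(\bs)$ follows from the telescoping $\sum_{j=1}^\infty(a_j-a_{N+j})=\sum_{j=1}^N a_j$ with $a_j=\Psi(\bs^{[1]};j-1)/j^{s_1}\to 0$, which by the $d-1$ interpolation unpacks to $\sum_{j=1}^N\zeta_{j-1}(\bs^{[1]})/j^{s_1}=\zeta_N(\bs)$.

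The main technical obstacle is the index bookkeeping in the second piece of the Hurwitz derivation, where one has to track carefully how the indices of $\zeta^\star$ shift under the recursion and the relabeling. The conceptual pivot is that the $\zeta(\bs)$ term (the $i=0$ contribution to \eqref{eq:Psi Hurwitz}) arises precisely from the first piece through the $d-1$ interpolation identity $\sum_j\zeta_{j-1}(\bs^{[1]})/j^{s_1}=\zeta(\bs)$; it is this interplay between the interpolation property and the Hurwitz expansion at level $d-1$ that lets the induction close cleanly, without recourse to any auxiliary stuffle identity.
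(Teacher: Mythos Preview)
Your proof is correct and follows essentially the same route as the paper's: induction on $d$, splitting \eqref{eq:Psi(s;w) defn} into the two absolutely convergent sums, evaluating the first via the inductive interpolation $\Psi(\bs^{[1]};j-1)=\zeta_{j-1}(\bs^{[1]})$ to produce the $i=0$ term $\zeta(\bs)$, and collapsing the second via the one-step recursion $\zeta^\star(a_1,\ldots,a_k;w)=\sum_{l\ge 1}\zeta^\star(a_1,\ldots,a_{k-1};w+l-1)/(w+l)^{a_k}$ to produce the remaining terms. The paper's proof is terser---it does not spell out the convergence estimate or the $\zeta^\star$ recursion explicitly---but the logical structure is identical, including the telescoping argument for $\Psi(\bs;N)=\zeta_N(\bs)$.
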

\begin{proof}
We proceed by induction on $d\ge 1$. 
For $d=1$, the convergence of \eqref{eq:Psi(s;w) defn} and the identity \eqref{eq:Psi Hurwitz} are obvious. 
Moreover, for $w=N\in\N_0$, 
\eqref{eq:Psi(s;w) defn} becomes the telescoping series 
\[\Psi(s;N)=\sum_{j=1}^\infty\biggl\{\frac{1}{j^s}-\frac{1}{(N+j)^s}\biggr\}
=\sum_{j=1}^N\frac{1}{j^s}=\zeta_N(s).\]
For $d\ge 2$, we use the induction hypothesis to compute as 
\begin{align*}
\Psi(\bs;w)&=\sum_{j=1}^\infty\Biggl\{\frac{\zeta_{j-1}(s_2,\ldots,s_d)}{j^{s_1}}\\
&\qquad\qquad -\frac{1}{(w+j)^{s_1}}\sum_{i=1}^d (-1)^{i-1}\zeta^\star(s_i,\ldots,s_2;w+j-1)
\zeta(s_{i+1},\ldots,s_d)\Biggr\}\\
&=\zeta(s_1,s_2,\ldots,s_d)
+\sum_{i=1}^d (-1)^i \zeta^\star(s_i,\ldots,s_1;w)\zeta(s_{i+1},\ldots,s_d), 
\end{align*}
and obtain the first statement. For $N\in\N_0$, we have 
\[\Psi(\bs;N)=\sum_{j=1}^N\frac{\Psi(s_2,\ldots,s_d;j-1)}{j^{s_1}}
=\sum_{j=1}^N\frac{\zeta_{j-1}(s_2,\ldots,s_d)}{j^{s_1}}
=\zeta_N(s_1,s_2,\ldots,s_d). \qedhere \] 
\end{proof}

\section{Integrand $G(\bq;w)$}
In this section, we introduce and study a function $G^d(\bq;w)$, 
which will be the main factor in the integral represention of $\Psi(\bs;w)$. 

For integers $d\ge 0$, let $U_d$ denote the domain in $\C^d$ defined by 
\[U_d\coloneqq \bigl\{(q_1,\ldots,q_d)\in(\C^\times)^d\bigm|
-\pi<\arg q_i+\cdots+\arg q_j<\pi\ (1\le\forall i\le\forall j\le d)\bigr\}. \]
Note that the power functions $(q_i\cdots q_j)^w$ on $U_d$ are defined by the convention in \S\ref{subsec:Notation}, 
and the identity $(q_i\cdots q_j)^w=q_i^w\cdots q_j^w$ holds for any $i\le j$ and $w\in\C$. 

\begin{defn}
We define the function $G^d(\bq;w)$ on $ U_d\times\C$ inductively by 
\[G^0(\varnothing;w)\coloneqq 1,\quad 
G^1(q;w)\coloneqq\begin{cases}
\frac{1-q^w}{1-q} & (q\ne 1),\\
w & (q=1),
\end{cases}\]
and 
\begin{equation}\label{eq:G^d}
G^d(\bq;w)\coloneqq\begin{cases}
\frac{1}{1-q_d}\bigl\{G^{d-1}(\bq_{[d-1]};w)-G^{d-1}(\bq_{[d-2]},q_{d-1}q_d;w)\bigr\} & (q_d\ne 1),\\[2mm]
q_{d-1}\frac{\partial}{\partial q_{d-1}}G^{d-1}(\bq_{[d-1]};w) & (q_d=1)
\end{cases}
\end{equation}
for $d\ge 2$. Again we will often omit the superscript $d$. 
\end{defn}

\begin{prop}\label{prop3.2}
$G^d(\bq;w)$ is a holomorphic function on $U_d\times \C$. 
\end{prop}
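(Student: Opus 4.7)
The plan is to induct on $d$. The base case $d=0$ is trivial. For $d=1$, the only candidate singularity of $(1-q^w)/(1-q)$ on $U_1 = \C\setminus\R_{\le 0}$ is at $q=1$, and expanding $1-q^w = 1-e^{w\log q} = -w\log q + O((\log q)^2)$ against $1-q = -\log q + O((\log q)^2)$ shows that the quotient extends holomorphically in $(q,w)$ across $q=1$ with value $w$, matching the second clause of the definition.

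For the inductive step, assume $G^{d-1}$ is holomorphic on $U_{d-1} \times \C$. I would introduce the divided-difference-type function
\[
\widetilde{G}(\by; z_1, z_2; w) := \begin{cases} \dfrac{G^{d-1}(\by, z_1; w) - G^{d-1}(\by, z_2; w)}{z_1 - z_2} & (z_1 \ne z_2), \\[1ex] \dfrac{\partial G^{d-1}}{\partial z}(\by, z_1; w) & (z_1 = z_2) \end{cases}
\]
on the open set $V := \{(\by; z_1, z_2; w) : (\by, z_1), (\by, z_2) \in U_{d-1}\}\times\C$. Away from the diagonal, $\widetilde{G}$ is holomorphic as a ratio with nonvanishing denominator; near a diagonal point, on a polydisk small enough that the segment from $z_2$ to $z_1$ stays inside the slice $\{z : (\by, z) \in U_{d-1}\}$, the integral representation
\[
\widetilde{G}(\by; z_1, z_2; w) = \int_0^1 \frac{\partial G^{d-1}}{\partial z}\bigl(\by, z_2 + t(z_1 - z_2); w\bigr)\,dt
\]
exhibits $\widetilde{G}$ as holomorphic there as well. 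Using the identity $1 - q_d = (q_{d-1} - q_{d-1}q_d)/q_{d-1}$, both cases of \eqref{eq:G^d} then collapse into the single expression
\[
G^d(\bq; w) = q_{d-1}\cdot\widetilde{G}\bigl(\bq_{[d-2]};\,q_{d-1},\,q_{d-1}q_d;\,w\bigr),
\]
so holomorphy of $G^d$ on $U_d\times\C$ follows from that of $\widetilde G$, provided the map $(\bq,w)\mapsto(\bq_{[d-2]}; q_{d-1}, q_{d-1}q_d; w)$ lands in $V$.

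The main (and essentially only) obstacle is this domain check, specifically the claim $(\bq_{[d-2]}, q_{d-1} q_d) \in U_{d-1}$ for $\bq \in U_d$. Nonvanishing of $q_{d-1}q_d$ is immediate. The substantive constraint is, for each $1 \le i \le d-1$, the bound $-\pi < \arg q_i + \cdots + \arg q_{d-2} + \arg(q_{d-1} q_d) < \pi$. But the defining inequality $-\pi < \arg q_{d-1} + \arg q_d < \pi$ of $U_d$ is precisely what guarantees $\arg(q_{d-1} q_d) = \arg q_{d-1} + \arg q_d$ on the principal branch, whereupon the above reduces to the partial-sum bound $-\pi < \arg q_i + \cdots + \arg q_d < \pi$ that holds by $\bq \in U_d$. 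This is where the somewhat intricate shape of $U_d$ pays off; the remainder of the argument is standard complex analysis of divided differences.
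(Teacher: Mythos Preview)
Your proof is correct and follows the same inductive scheme as the paper: both arguments reduce the step from $d-1$ to $d$ to the fact that a divided difference of a holomorphic function is holomorphic across the diagonal. The paper handles this via the power-series expansion $g(q)=\sum_{n\ge 1}c_n(q-a)^{n-1}$ (applied in the variable $q_d$ with the other variables fixed, so joint holomorphy implicitly rests on Hartogs), whereas you use the integral form $\widetilde G=\int_0^1 \partial_z G^{d-1}(\by,z_2+t(z_1-z_2);w)\,dt$, which yields joint holomorphy in all variables at once without appeal to Hartogs. Your explicit verification that $(\bq_{[d-2]},q_{d-1}q_d)\in U_{d-1}$ whenever $\bq\in U_d$---using that the constraint $-\pi<\arg q_{d-1}+\arg q_d<\pi$ forces $\arg(q_{d-1}q_d)=\arg q_{d-1}+\arg q_d$---is a point the paper leaves implicit, and it is indeed where the shape of $U_d$ is used.
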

\begin{proof}
The statement holds trivially for $d=0$, and also for $d=1$; 
at points of the form $(1,w)$, the holomorphy of $G^1$ with respect to $q$ follows from 
the following elementary fact: for a holomorphic function $f(q)$ around a point $q=a$, the function 
\[g(q)\coloneqq \begin{cases}
\frac{f(q)-f(a)}{q-a} & (q\ne a),\\[2mm]
f'(a) & (q=a)
\end{cases}\]
is also holomorphic around $q=a$. Indeed, if $f(q)=\sum_{n\ge 0}c_n(q-a)^n$, 
then one has $g(q)=\sum_{n\ge 1}c_n(q-a)^{n-1}$. 

Let $d\ge 2$ and assume that $G^{d-1}(\bq_{[d-1]};w)$ is holomorphic on $U_{d-1}\times\C$. 
Then it is obvious from the definition that $G^d(\bq;w)$ is holomorphic where $q_d\ne 1$. 
At a point $(\bq,w)$ with $q_d=1$, it is also immediate from the definition that 
$G^d(\bq;w)$ is holomorphic with respect to the variables $q_1,\ldots,q_{d-1},w$, 
and the holomorphy with respect to $q_d$ is shown as in the case of $d=1$. 
\end{proof}

When $d=2$, we have 
\begin{align*}
G^2(q_1,q_2;w)&=\frac{1}{1-q_2}\biggl\{\frac{1-q_1^w}{1-q_1}-\frac{1-(q_1q_2)^w}{1-q_1q_2}\biggr\}\\
&=\frac{q_1}{(1-q_1)(1-q_1q_2)}-\frac{q_1^w}{(1-q_1)(1-q_2)}+\frac{(q_1q_2)^w}{(1-q_1q_2)(1-q_2)}
\end{align*}
generically, i.e., where none of $q_1$, $q_2$, $q_1q_2$ is equal to $1$. 
For general $d$, we have the following expression. 

\begin{prop}\label{prop:G^d explicit}
For integers $i$ and $j$ with $1\le i\le j\le d$, set 
\[D_{ij}\coloneqq\bigl\{\bq=(q_1,\ldots,q_d)\in \C^d\bigm|q_i\cdots q_j=1\bigr\}\]
and $D\coloneqq \bigcup_{i\le j}D_{ij}$. Then we have 
\begin{equation}\label{eq:G^d explicit}
G^d(\bq;w)=\sum_{l=0}^d(-1)^l\frac{(q_1\cdots q_l)^w}{q_{l+1}\cdots q_d}
\prod_{1\le k\le l}\frac{1}{1-q_k\cdots q_l}\prod_{l<k\le d}\frac{q_{l+1}\cdots q_k}{1-q_{l+1}\cdots q_k}
\end{equation}
for $\bq\in U_d\setminus D$. 
\end{prop}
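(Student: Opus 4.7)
The plan is to prove the identity by induction on $d$, checking that the right-hand side of \eqref{eq:G^d explicit} — call it $H^d(\bq;w)$ — satisfies the same recursion \eqref{eq:G^d} that characterizes $G^d$. The base cases $d=0$ (where both sides equal $1$) and $d=1$ (where the $l=0$ and $l=1$ terms combine to give $\frac{1-q^w}{1-q}$) are immediate. For the inductive step, since $\bq\in U_d\setminus D$ forces $q_d\ne 1$, only the first case of the recursion is relevant. I would first check that the shifted tuple $(\bq_{[d-2]},q_{d-1}q_d)$ lies in $U_{d-1}$ and avoids the corresponding singular locus — the admissibility conditions on $U_d$ for sub-ranges ending at index $d$ become exactly the conditions on $U_{d-1}$ after merging $q_{d-1}q_d$ — so the inductive hypothesis applies to both summands. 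The task then reduces to the purely algebraic identity
\[H^d(\bq;w)=\frac{1}{1-q_d}\bigl\{H^{d-1}(\bq_{[d-1]};w)-H^{d-1}(\bq_{[d-2]},q_{d-1}q_d;w)\bigr\}.\]

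To prove this, I would group the contributions to the right-hand side by the index $l\in\{0,1,\ldots,d-1\}$ labeling the terms of $H^{d-1}$. For $l\le d-2$, the substitution $q_{d-1}\mapsto q_{d-1}q_d$ affects only the factor at $k=d-1$ in the second product (turning $q_{l+1}\cdots q_{d-1}$ into $q_{l+1}\cdots q_d$) and the denominator prefactor $q_{l+1}\cdots q_{d-1}$. After cancellation, the difference of the two $l$-th terms is governed by the elementary identity
\[\frac{1}{1-q_{l+1}\cdots q_{d-1}}-\frac{1}{1-q_{l+1}\cdots q_d}=\frac{q_{l+1}\cdots q_{d-1}(1-q_d)}{(1-q_{l+1}\cdots q_{d-1})(1-q_{l+1}\cdots q_d)},\]
in which the factor $1-q_d$ cancels the prefactor $\frac{1}{1-q_d}$, leaving precisely the $l$-th term of $H^d$. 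For the boundary index $l=d-1$, the substitution $q_{d-1}\mapsto q_{d-1}q_d$ promotes the power $(q_1\cdots q_{d-1})^w$ to $(q_1\cdots q_d)^w$ and shifts the product $\prod_{k=1}^{d-1}\frac{1}{1-q_k\cdots q_{d-1}}$ to $\prod_{k=1}^{d-1}\frac{1}{1-q_k\cdots q_d}$; after division by $1-q_d$, these two contributions reproduce the $l=d-1$ and $l=d$ terms of $H^d$, respectively.

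The main obstacle I expect is the combinatorial bookkeeping in the inductive step: one must carefully track which of the nested factors are affected by the substitution $q_{d-1}\mapsto q_{d-1}q_d$ and verify that the division by $1-q_d$ simultaneously (i) merges each pair of $l$-th terms for $l\le d-2$ into the correct $l$-th term of $H^d$ (extending the second product from $l<k\le d-1$ to $l<k\le d$), and (ii) splits the pair of $l=d-1$ terms into the two new boundary terms of $H^d$. Once the substitution is tabulated carefully, each piece reduces to a short manipulation anchored in the partial-fraction identity displayed above.
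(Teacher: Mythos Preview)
Your proposal is correct, and it shares the paper's overarching strategy: define $H^d$ to be the right-hand side, verify that $H^d$ satisfies the same recursion \eqref{eq:G^d} as $G^d$, and conclude by induction. The technical execution, however, differs. You manipulate the closed rational expression directly, using the partial-fraction identity to handle the terms $l\le d-2$ and a direct match for $l=d-1$. The paper instead restricts to $\abs{q_1},\ldots,\abs{q_d}<1$, expands each term of $H^d$ as a geometric series, and verifies the recursion at the level of the resulting power series; equality on all of $U_d\setminus D$ then follows by meromorphic continuation. Your route is more self-contained and avoids the detour through series, while the paper's route has the advantage of producing the series expansion \eqref{eq:G^d series} as an immediate by-product (this expansion is used later, in the proof of Theorem~\ref{thm:Psi integral}). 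Your care in checking that both $\bq_{[d-1]}$ and $(\bq_{[d-2]},q_{d-1}q_d)$ lie in $U_{d-1}$ off the singular locus is a point the paper sidesteps by working in the convergent region.
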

\begin{proof}
Write $\tilde{G}^d(\bq;w)$ for the right hand side. 
Since $G^d(\bq;w)$ and $\tilde{G}^d(\bq;w)$ are both meromorphic on $ U_d\times \C$, 
it suffices to show the equality assuming $\abs{q_1},\ldots,\abs{q_d}<1$.
In this range, $\tilde{G}^d(\bq;w)$ is expanded to the power series as  
\[\tilde{G}^d(\bq;w)=\sum_{l=0}^d(-1)^l(q_1\cdots q_l)^w
\sum_{\substack{0\le j_1\le\cdots\le j_l\\ j_{l+1}>\cdots>j_d\ge 0}}q_1^{j_1}\cdots q_d^{j_d}. \]
We write $A_l$ for the $l$-th term in the right hand side. 
For $l=0,\ldots,d-2$, we have 
\begin{align*}
(1-q_d)A_l&=(-1)^l(q_1\cdots q_l)^w
\sum_{\substack{0\le j_1\le\cdots\le j_l\\ j_{l+1}>\cdots>j_{d-1}\ge 0}}
q_1^{j_1}\cdots q_{d-1}^{j_{d-1}}
\cdot (1-q_d)\sum_{j_d=0}^{j_{d-1}-1}q_d^{j_d}\\
&=(-1)^l(q_1\cdots q_l)^w
\sum_{\substack{0\le j_1\le\cdots\le j_l\\ j_{l+1}>\cdots>j_{d-1}\ge 0}}
q_1^{j_1}\cdots q_{d-1}^{j_{d-1}}(1-q_d^{j_{d-1}}). 
\end{align*}
We also compute the terms for $l=d-1$ and $d$ together to obtain 
\begin{align*}
&(1-q_d)(A_{d-1}+A_d)\\
&=(-1)^{d-1}(q_1\cdots q_{d-1})^w\sum_{0\le j_1\le\cdots\le j_{d-1}}
q_1^{j_1}\cdots q_{d-1}^{j_{d-1}}\cdot
(1-q_d)\Biggl\{\sum_{j_d=0}^\infty q_d^{j_d}-q_d^w\sum_{j_d=j_{d-1}}^\infty q_d^{j_d}\Biggr\}\\
&=(-1)^{d-1}(q_1\cdots q_{d-1})^w\sum_{0\le j_1\le\cdots\le j_{d-1}}
q_1^{j_1}\cdots q_{d-1}^{j_{d-1}}(1-q_d^{w+j_{d-1}}). 
\end{align*}
By combining these computations, we see that 
\[(1-q_d)\tilde{G}^d(\bq;w)=\tilde{G}^{d-1}(\bq_{[d-1]};w)-\tilde{G}^{d-1}(\bq_{[d-2]},q_{d-1}q_d;w), \]
that is, $\tilde{G}^d(\bq;w)$ satisfies the same recurrence relation as $G^d(\bq;w)$. 
Hence the desired identity follows by induction on $d$. 
\end{proof}

As a bi-product of the above proof, we obtain the following series expansion: 

\begin{cor}
For $\bq=(q_1,\ldots,q_d)\in U_d$ with $\abs{q_1},\ldots,\abs{q_d}<1$, we have 
\begin{align}
\label{eq:G^d series}
G(\bq;w)&=\sum_{l=0}^d (-1)^l (q_1\cdots q_l)^w
\sum_{\substack{0\le j_1\le\cdots\le j_l\\ j_{l+1}>\cdots>j_d\ge 0}}q_1^{j_1}\cdots q_d^{j_d}\\
\notag 
&=\sum_{l=1}^d (-1)^{l-1} (q_1\cdots q_{l-1})^w(1-q_l^w)
\sum_{0\le j_1\le\cdots\le j_l>j_{l+1}>\cdots>j_d\ge 0}q_1^{j_1}\cdots q_d^{j_d}. 
\end{align}
\end{cor}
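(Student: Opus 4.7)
The first equality is essentially a by-product of the proof of Proposition \ref{prop:G^d explicit}. Indeed, under the hypothesis $\abs{q_1},\ldots,\abs{q_d}<1$, expanding each geometric factor $\frac{1}{1-q_i\cdots q_j}$ in the explicit formula \eqref{eq:G^d explicit} as a convergent power series yields precisely the iterated sum on the first line of \eqref{eq:G^d series}; this expansion was in fact carried out in the middle of that proof (as the definition of $\tilde{G}^d(\bq;w)$). Absolute convergence in the polydisc $\abs{q_i}<1$ validates the rearrangement.

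For the second equality, I plan to use an Abel-summation regrouping. Write $B_l\coloneqq(q_1\cdots q_l)^w$, so that the prefactor in the second line reads $(q_1\cdots q_{l-1})^w(1-q_l^w)=B_{l-1}-B_l$, and let $T_l$ denote the inner sum in the second expression, i.e.\ over $0\le j_1\le\cdots\le j_l>j_{l+1}>\cdots>j_d\ge 0$. Expanding and shifting the index in one half,
\[
\sum_{l=1}^d(-1)^{l-1}(B_{l-1}-B_l)T_l
=\sum_{l=0}^{d-1}(-1)^l B_l T_{l+1}+\sum_{l=1}^d(-1)^l B_l T_l.
\]
Isolating the endpoints $l=0$ and $l=d$ then recollects this into
\[
B_0 T_1+\sum_{l=1}^{d-1}(-1)^l B_l(T_l+T_{l+1})+(-1)^d B_d T_d.
\]

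The combinatorial heart of the argument is the identification $T_l+T_{l+1}=S_l$, where $S_l$ denotes the inner sum appearing in the first expression (over $0\le j_1\le\cdots\le j_l$ with $j_{l+1}>\cdots>j_d\ge 0$). This is transparent: in $S_l$ the indices $j_l$ and $j_{l+1}$ are unconstrained relative to one another, so splitting the domain by whether $j_l>j_{l+1}$ or $j_l\le j_{l+1}$ produces exactly $T_l$ and $T_{l+1}$ respectively. The boundary cases $S_0=T_1$ (since $j_1>j_2\ge 0$ already forces $j_1\ge 0$) and $S_d=T_d$ (vacuous second block) are immediate. Substituting $T_l+T_{l+1}\mapsto S_l$, $T_1\mapsto S_0$, $T_d\mapsto S_d$ recovers $\sum_{l=0}^d(-1)^l B_l S_l$, which is the first expression.

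There is no real analytic obstacle, since every manipulation is justified by absolute convergence in the polydisc; the ``hard part'' is purely bookkeeping, namely keeping the Abel shift and the two boundary contributions $l=0$ and $l=d$ aligned with the vacuous-constraint conventions defining $S_0$ and $S_d$.
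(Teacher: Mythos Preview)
Your proposal is correct and follows essentially the same approach as the paper: the first identity is read off from the series expansion appearing in the proof of Proposition~\ref{prop:G^d explicit}, and the second is obtained from the decomposition $S_l=T_l+T_{l+1}$ of the inner sums (the paper states exactly this splitting and leaves the resulting telescoping implicit, whereas you spell out the Abel-type regrouping in full).
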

\begin{proof}
The first expression is given in the proof of Proposition \ref{prop:G^d explicit}. 
The second expression follows from the decomposition of the nested sum  
\[\sum_{\substack{0\le j_1\le\cdots\le j_l\\ j_{l+1}>\cdots>j_d\ge 0}}
=\sum_{0\le j_1\le\cdots\le j_l>j_{l+1}>\cdots>j_d\ge 0}
+\sum_{0\le j_1\le\cdots\le j_l\le j_{l+1}>\cdots>j_d\ge 0}\]
for $l=0,\ldots,d$. 
\end{proof}

We will need the following lemma, which gives an estimate of $G(\bq;w)$ when some entry of $\bq$ tends to zero, 
in the next section. Let $\theta$ be a number with $0<\theta<\pi/d$ and set 
\[C_\theta\coloneqq \{q\in\C^\times\mid -\theta\le\arg q\le\theta,\ \abs{q}\le 2\}. \]
Note that $C_\theta^d\subset U_d$. 

\begin{lem}\label{lem:G^d bound}
Let $\sigma$ be a positive real number. 
Then $(q_1\cdots q_d)^\sigma G(\bq;w)$ is bounded on $\bq=(q_1,\ldots,q_d)\in C_\theta^d$ 
uniformly for $w$ in any compact subset of the region $\Re(w)>-\sigma$. 
\end{lem}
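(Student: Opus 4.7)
My strategy is to derive an auxiliary recurrence for $G^d$ that separates the first and last coordinates, and then induct on $d$ using it. Specifically, I will establish
\[
(1-q_1\cdots q_d)\,G^d(\bq;w)\;=\;q_1\cdots q_{d-1}\,G^{d-1}(\bq_{[d-1]};w)-q_1^{w}\,G^{d-1}(\bq^{[1]};w)
\]
on $U_d\times\C$ (first off the locus $q_1\cdots q_d=1$, and then by holomorphic extension everywhere). The proof is by induction on $d$: the base $d=1$ reduces to the definition of $G^1$. For the step, I will apply the identity at $d-1$ both to $\bq_{[d-1]}$ and to $(\bq_{[d-2]},q_{d-1}q_d)$, subtract to cancel the common $G^{d-2}(\bq_{[d-2]};w)$ term, use the defining recurrence \eqref{eq:G^d} for $G^{d-1}(\bq^{[1]};w)$ to rewrite the remaining $G^{d-2}$ difference, and finally invoke \eqref{eq:G^d} for $G^d$ itself; the resulting algebraic simplification is the main technical chore.

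Given this identity, I induct on $d$ to prove the lemma; the case $d=0$ is trivial. In the step, I rewrite
\[
(q_1\cdots q_d)^{\sigma}\,G^{d}(\bq;w)\;=\;\frac{q_d^{\sigma}\,q_1\cdots q_{d-1}\cdot A(\bq;w)-q_1^{w+\sigma}\cdot B(\bq;w)}{1-q_1\cdots q_d},
\]
with $A(\bq;w)=(q_1\cdots q_{d-1})^{\sigma}G^{d-1}(\bq_{[d-1]};w)$ and $B(\bq;w)=(q_2\cdots q_d)^{\sigma}G^{d-1}(\bq^{[1]};w)$. Since $\bq_{[d-1]},\bq^{[1]}\in C_\theta^{d-1}$ and $\theta<\pi/d<\pi/(d-1)$, the inductive hypothesis bounds $|A|$ and $|B|$ uniformly. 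The prefactors $q_d^{\sigma}q_1\cdots q_{d-1}$ and $q_1^{w+\sigma}$ have uniformly bounded modulus on $C_\theta^d\times K$: $\sigma>0$ and $\Re(w+\sigma)\ge\delta>0$ (with $\delta=\min_{w\in K}(\Re(w)+\sigma)$) make the exponents have positive real part, while $|q_i|\le 2$, $|\arg q_i|\le\theta$, and $\max_{w\in K}|\Im w|<\infty$ control the remaining contributions. Hence the numerator is bounded by some constant $C$ independent of $\bq$ and $w$.

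It remains to handle the vanishing of the denominator on $\{q_1\cdots q_d=1\}$. I split $C_\theta^d$ into $\{\bq:|1-q_1\cdots q_d|\ge 1/2\}$, on which the quotient is trivially bounded by $2C$, and its complement. On the complement $|q_1\cdots q_d|>1/2$, which together with $|q_j|\le 2$ for all $j$ forces $|q_i|\ge 2^{-d}$ for every $i$; this region therefore has compact closure inside $U_d$, so the continuous function $(q_1\cdots q_d)^{\sigma}G^d(\bq;w)$ (by Proposition \ref{prop3.2}) is bounded on that closure times $K$. Combining both cases yields the lemma. The only nontrivial step is the derivation of the auxiliary recurrence; the rest is a clean compactness argument whose key point is that proximity to the singular locus $\{q_1\cdots q_d=1\}$ automatically rules out any $q_i$ approaching $0$.
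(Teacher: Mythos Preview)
Your proof is correct and follows a genuinely different route from the paper's. The paper argues by extending $f(\bq;w)=(q_1\cdots q_d)^\sigma G^d(\bq;w)$ by zero to the compact set $(\{0\}\cup C_\theta)^d$ and proving continuity there; the delicate point is continuity at points where some $q_i=0$, which the paper handles using the explicit closed form of Proposition~\ref{prop:G^d explicit} together with a Cauchy integral over carefully chosen polyradii that avoid the singular set $D$. Your argument avoids both the explicit formula and the contour-integral step: you instead establish the auxiliary identity
\[
(1-q_1\cdots q_d)\,G^d(\bq;w)=q_1\cdots q_{d-1}\,G^{d-1}(\bq_{[d-1]};w)-q_1^{w}\,G^{d-1}(\bq^{[1]};w),
\]
which is a clean consequence of the defining recurrence \eqref{eq:G^d} and is of independent interest. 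This lets you induct on $d$, bounding the numerator uniformly via the inductive hypothesis applied to $\bq_{[d-1]}$ and $\bq^{[1]}$ (both in $C_\theta^{d-1}$, and $\theta<\pi/d<\pi/(d-1)$), and then dispose of the denominator by the observation that $|1-q_1\cdots q_d|<1/2$ forces every $|q_i|\ge 2^{-d}$, confining $\bq$ to a compact subset of $U_d$ on which Proposition~\ref{prop3.2} applies directly. The trade-off: the paper's proof yields the slightly stronger statement that $f$ extends continuously to the boundary where some $q_i=0$, while your proof is more elementary, needs only the recursive definition of $G^d$, and produces the auxiliary recurrence as a byproduct.
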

\begin{proof}
We define a function $f(\bq;w)$ of $w\in\C$ with $\Re(w)>-\sigma$ and 
$\bq=(q_1,\ldots,q_d)\in \bigl(\{0\}\cup C_\theta\bigr)^d$ by 
\[f(\bq;w)\coloneqq \begin{cases}
(q_1\cdots q_d)^\sigma G^d(\bq;w) & (q_1\cdots q_d\ne 0),\\
0 & (q_1\cdots q_d=0).
\end{cases}\]
Then it suffices to prove that $f(\bq;w)$ is a continuous function. 
Since it is even holomorphic on $U_d\times\C$, it remains to show the continuity at 
each point $(\bq_0,w_0)=(q_{01},\ldots,q_{0d},w_0)$ with $q_{01}\cdots q_{0d}=0$.  
Put 
\[I\coloneqq\{i \mid q_{0i}=0\},\quad 
J\coloneqq\{j \mid q_{0j}\ne 0\}=\{1,\ldots,d\}\setminus I. \]
Then $I\ne\emptyset$ by assumption. 

For $\delta>0$ and $r_j>0$ ($j\in J$), define a subset $S=S_{\delta,(r_j)}$ of $\C^d$ by 
\[S\coloneqq\biggl\{ (q_1,\ldots,q_d)\in\C^d\biggm|
\begin{array}{l}
-\theta\le\arg q_i\le \theta,\  \abs{q_i}<\delta\ (i\in I),\\
\abs{q_j-q_{0j}}=r_j\ (j\in J)
\end{array}\biggr\}. \]
If $r_j$ are sufficiently small, we have $S\subset U_d$. 
In what follows, we prove the following claim: we can choose $r_j>0$ for $j\in J$ so that 
there is no solution of the equation $\prod_{j\in J'}q_j=1$ with $\emptyset\ne J'\subset J$ and $\bq=(q_1,\ldots,q_d)\in S$. 
Then, in particular, we have $S\subset U_d\setminus D$ for any sufficiently small $\delta>0$. 

To prove the above claim, fix a numbering $J=\{j_1,\ldots,j_n\}$ of the elements of $J$ (hence $n=\#J$). 
Then, for $m=1,\ldots,n$, one can inductively choose $r_{j_m}>0$ so that 
there is no solution of 
\[\prod_{j\in J'}q_j\cdot q_{j_m}\cdot \prod_{j\in J''}q_{0j}=1\]
with $J'\subset\{j_1,\ldots,j_{m-1}\}$, $J''\subset\{j_{m+1},\ldots,j_n\}$ 
and $\abs{q_j-q_{0j}}=r_j$ for $j\in J'$ and $j=j_m$. 
Indeed, in the $m$-th step, one can avoid the solutions of the form $q_{0j_m}\prod_{j\in J''}q_{0j}=1$ 
by replacing $q_{0j_m}$ with $q_{j_m}$, not producing other solutions by choosing sufficiently small $r_{j_m}>0$. 

We return to the proof of Lemma \ref{lem:G^d bound}. 
Take an arbitrary $\eps>0$. Let $\sigma'$ be a real number with $\Re(w_0)>-\sigma'>-\sigma$.  
Then, by the expression 
\[f(\bq;w)=(q_1\cdots q_d)^\sigma\sum_{l=0}^d(-1)^l\frac{(q_1\cdots q_l)^w}{q_{l+1}\cdots q_d}
\prod_{1\le k\le l}\frac{1}{1-q_k\cdots q_l}\prod_{l<k\le d}\frac{q_{l+1}\cdots q_k}{1-q_{l+1}\cdots q_k}\]
from Proposition \ref{prop:G^d explicit} and by the assumption that $I\ne\emptyset$, 
we see that, for a sufficiently small $\delta>0$, 
$\abs{f(\bq;w)}<\eps$ holds for all $w$ with $\Re(w)>-\sigma'$ and $\bq\in S$ ($\subset U_d\setminus D$). 
If we put 
\[W=\biggl\{(q_1,\ldots,q_d)\in\C^d\biggm|
\begin{array}{l}
-\theta\le\arg q_i\le \theta,\  \abs{q_i}<\delta\ (i\in I),\\
\abs{q_j-q_{0j}}<r_j/2\ (j\in J)
\end{array}\biggr\}\subset U_d, \]
Cauchy's integral formula shows that 
\[f(\bq;w)=\frac{1}{(2\pi i)^n}\int_{\substack{\abs{p_j-q_{0j}}=r_j\\(j\in J)}}
f(\bp;w)\prod_{j\in J}\frac{dp_j}{p_j-q_j}\]
holds for $\bq=(q_1,\ldots,q_d)\in W$, where the integral is taken over $\bp=(p_1,\ldots,p_d)$ 
such that $p_i=q_i$ for $i\in I$ and $\abs{p_j-q_{0j}}=r_j$ for $j\in J$. 
In particular, $\bp$ varies on $S$, hence the estimates $\abs{f(\bp;w)}<\eps$ and 
\[\abs{p_j-q_j}\ge\abs{p_j-q_{0j}}-\abs{q_j-q_{0j}}>r_j/2 \qquad (j\in J)\] 
imply 
\[\abs{f(\bq;w)}\le \frac{\eps}{(2\pi )^n}\prod_{j\in J}\frac{2\pi r_j}{r_j/2}=2^n\eps\]
for all $w$ with $\Re(w)>-\sigma'$ and $\bq\in W$. 
This proves the continuity of $f(\bq;w)$ at $(\bq_0,w_0)$ as desired. 
\end{proof}

\section{Integral representation of $\Psi(\bs;w)$}
In this section, we provide the following integral representation of $\Psi(\bs;w)$. 

\begin{thm}\label{thm:Psi integral}
For $\bs=(s_1,\ldots,s_d)\in\C^d$ and $w\in\C$
such that $\Re(s_1),\ldots,\Re(s_d)>1$ and $\Re(w)>-1$, 
the function $\Psi(\bs;w)$ is represented by the integral 
\begin{align}
\label{eq:Psi integral}
\Psi&(\bs;w)=\frac{1}{\prod_{m=1}^d\Gamma(s_m)}
\int_0^1\cdots\int_0^1 G(\bq;w)\prod_{m=1}^d(-\log q_m)^{s_m-1}\,dq_1\cdots dq_d\\
\notag 
&=\frac{1}{\prod_{m=1}^d\Gamma(s_m)}
\int_0^\infty\cdots\int_0^\infty G(e^{-x_1},\ldots,e^{-x_d};w)
\prod_{m=1}^d(x_m^{s_m-1}e^{-x_m})\,dx_1\cdots dx_d.
\end{align}
The integral of the right hand side converges absolutely 
for $\Re(s_1),\ldots,\Re(s_d)>0$ and $\Re(w)>-1$, uniformly on any compact set, 
and hence gives an analytic continuation of $\Psi(\bs;w)$ to that region. 
\end{thm}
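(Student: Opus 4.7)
The plan is to establish the integral identity first on the smaller region $\Re(s_m)>1$ by direct term-by-term integration of the series expansion \eqref{eq:G^d series} for $G$, and then to use Lemma \ref{lem:G^d bound} to prove absolute convergence of the integral on the wider region $\Re(s_m)>0$, whence analytic continuation follows.

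For the identity on $\Re(s_m)>1$, $\Re(w)>-1$, I substitute \eqref{eq:G^d series} into the integral and interchange sum and integral. This is justified by Tonelli, since the absolute estimate $|q^w|=q^{\Re(w)}$ for $q\in(0,1)$ together with the standard Mellin formula
\[
\int_0^1 q^{j+w}(-\log q)^{s-1}\,dq=\frac{\Gamma(s)}{(j+w+1)^s}\qquad(j\in\N_0,\ \Re(s)>0,\ \Re(w)>-1)
\]
makes the sum of absolute integrals finite once $\Re(s_m)>1$. After the factors $\Gamma(s_m)$ cancel and I re-index $m_k=j_k+1$, the remaining sum is
\[
\sum_{l=0}^d(-1)^l\sum_{\substack{1\le m_1\le\cdots\le m_l\\m_{l+1}>\cdots>m_d\ge 1}}\prod_{k=1}^l(w+m_k)^{-s_k}\prod_{k=l+1}^d m_k^{-s_k}.
\]
Reversing the order of $m_1,\ldots,m_l$ (setting $n_i=m_{l+1-i}$) identifies the first product with $\zeta^\star(s_l,\ldots,s_1;w)$ and the second is $\zeta(s_{l+1},\ldots,s_d)$, so the total equals the right-hand side of \eqref{eq:Psi Hurwitz} in Proposition \ref{prop:Psi Hurwitz}.

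For convergence on $\Re(s_m)>0$, $\Re(w)>-1$, I fix any $\theta\in(0,\pi/d)$ and, given a compact set $K$ in the target region, choose a real $\sigma$ with $\max(0,-\inf_K\Re(w))<\sigma<1$; such $\sigma$ exists precisely because $\inf_K\Re(w)>-1$. Lemma \ref{lem:G^d bound} then supplies a constant $M$ with $|G(\bq;w)|\le M(q_1\cdots q_d)^{-\sigma}$ on $(0,1]^d\subset C_\theta^d$, uniformly for $w\in K$. In the $x_m$ variables the absolute value of the integrand is therefore bounded by $M\prod_m x_m^{\Re(s_m)-1}e^{-(1-\sigma)x_m}$, which is integrable on $(0,\infty)^d$ uniformly on $K$. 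Hence the integral defines a holomorphic function on $\{\Re(s_m)>0,\Re(w)>-1\}$, which by the identity theorem must coincide with $\Psi(\bs;w)$ throughout. I expect the combinatorial bookkeeping in the re-indexing step that identifies the double sum with the $\zeta$-$\zeta^\star$ formula of Proposition \ref{prop:Psi Hurwitz} to be the trickiest part; by comparison, the uniform estimate is routine once Lemma \ref{lem:G^d bound} is in hand.
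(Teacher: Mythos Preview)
Your proposal is correct and follows essentially the same approach as the paper: you substitute the series expansion \eqref{eq:G^d series} into the integral, integrate term by term via the standard Mellin identity, and recognize the result as the expression \eqref{eq:Psi Hurwitz}, then use Lemma \ref{lem:G^d bound} with $0<\sigma<1$ to bound $G$ and obtain absolute convergence for $\Re(s_m)>0$. The only difference is the order of presentation (you do the identity first, the paper does convergence first), and you are slightly more explicit about the Tonelli justification and the re-indexing that turns the inner sum into $\zeta^\star(s_l,\ldots,s_1;w)\,\zeta(s_{l+1},\ldots,s_d)$.
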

\begin{proof}
We first prove the statement on the convergence of the integral by using Lemma \ref{lem:G^d bound}.  
For any $0<\sigma<1$ and $w\in\C$ with $\Re(w)>-\sigma$, we can take a constant $C>0$ such that 
$\abs{G^d(\bq;w)}\le C\abs{q_1\cdots q_d}^{-\sigma}$ for all $\bq\in (0,1]^d$. 
Then we have 
\[\Biggl| G(\bq;w)\prod_{m=1}^d(-\log q_m)^{s_m-1}\Biggr|
\le C\prod_{m=1}^d\Bigl(q_m^{-\sigma}(-\log q_m)^{\Re(s_m)-1}\Bigr),\]
which implies the desired integrability. 

Next we prove the identity \eqref{eq:Psi integral}. 
If $\Re(s_1),\ldots,\Re(s_d)>1$, we may substitute the series experssion \eqref{eq:G^d series} 
into the integral of \eqref{eq:Psi integral} and exchange the order of integral and summation: 
\begin{align*}
&\int_0^1\cdots\int_0^1 G(\bq;w)\prod_{m=1}^d(-\log q_m)^{s_m-1}dq_1\cdots dq_d\\
&=\sum_{l=0}^d(-1)^l\sum_{\substack{0\le j_1\le\cdots\le j_l\\ j_{l+1}>\cdots>j_d\ge 0}}
\prod_{m=1}^l \int_0^1 q_m^{w+j_m}\,(-\log q_m)^{s_m-1}dq_m\\
&\hspace{150pt}\times\prod_{m=l+1}^d \int_0^1 q_m^{j_m}\,(-\log q_m)^{s_m-1}dq_m\\
&=\sum_{l=0}^d(-1)^l\sum_{\substack{0\le j_1\le\cdots\le j_l\\ j_{l+1}>\cdots>j_d\ge 0}}
\prod_{m=1}^l \frac{\Gamma(s_m)}{(w+j_m+1)^{s_m}}\times\prod_{m=l+1}^d \frac{\Gamma(s_m)}{(j_m+1)^{s_m}}\\
&=\prod_{m=1}^d\Gamma(s_m)\sum_{l=0}^d(-1)^l\zeta^\star(s_l,\ldots,s_1;w)\,\zeta(s_{l+1},\ldots,s_d)
=\prod_{m=1}^d\Gamma(s_m)\cdot\Psi(\bs;w), 
\end{align*}
where the last equality is the identity \eqref{eq:Psi Hurwitz}. 
Thus \eqref{eq:Psi integral} holds. 
\end{proof}

\begin{cor}
$\Psi^d(\bs;w)$ has a holomorphic continuation to the domain $\C^d\times(\C\setminus\R_{\le -1})$.  
\end{cor}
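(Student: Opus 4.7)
The plan is to argue by induction on $d$, with the trivial base case $d=0$. For the inductive step, I will assume that $\Psi^{d-1}$ already admits a holomorphic continuation to $\C^{d-1} \times (\C \setminus \R_{\le -1})$. Theorem \ref{thm:Psi integral} provides holomorphy of $\Psi^d(\bs;w)$ on the region $\{\Re(s_m) > 0 \ (\forall m)\} \times \{\Re(w) > -1\}$; the task is then to enlarge this domain in $\bs$ and in $w$.

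The first step extends in $\bs$ by the standard Mellin-extension device, applied one coordinate at a time. Fixing an index $m$, I will Taylor-expand the integrand of Theorem \ref{thm:Psi integral} in $x_m$ about $x_m = 0$; since $G$ is holomorphic on $U_d \times \C$ by Proposition \ref{prop3.2}, the Taylor coefficients depend holomorphically on the remaining variables and on $w$. Integrating the degree-$(K-1)$ Taylor polynomial term-by-term produces a factor $\Gamma(s_m + k)$ for the $k$-th term, which after division by $\Gamma(s_m)$ reduces to the polynomial $s_m(s_m + 1)\cdots(s_m + k - 1)$, free of poles. The remainder is $O(x_m^K)$ near $x_m = 0$, so its integral converges absolutely for $\Re(s_m) > -K$. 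Letting $K \to \infty$ extends $\Psi^d$ in $s_m$ to all of $\C$, and iterating across $m = 1, \ldots, d$ yields holomorphy on $\C^d \times \{\Re(w) > -1\}$.

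The second step extends in $w$ using the shift identity
\[
\Psi^d(\bs; w) = \Psi^d(\bs; w+1) - \frac{\Psi^{d-1}(\bs^{[1]}; w)}{(w+1)^{s_1}},
\]
which follows by a direct re-indexing of the series \eqref{eq:Psi(s;w) defn} for $\Re(s_1) > 1$ and $w \notin \R_{\le -1}$, and then holds on the full continued domain by uniqueness of analytic continuation. The first term on the right is known on $\C^d \times \{\Re(w) > -1\}$ from the previous stage, the second on $\C^d \times (\C \setminus \R_{\le -1})$ by the inductive hypothesis, and $(w+1)^{s_1}$ is nonvanishing and holomorphic for $w \notin \R_{\le -1}$. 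This extends $\Psi^d$ to $\Re(w) > -2$ off the excluded ray, and iterating the shift relation reaches all of $\C \setminus \R_{\le -1}$.

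The hard part will be the first step: the term-by-term manipulation requires uniform-in-$w$ and locally uniform-in-the-other-$x_n$'s estimates on both the Taylor coefficients and the remainder, together with Fubini-type arguments to preserve joint holomorphy when the extension is carried out successively in $s_1, s_2, \ldots$. Lemma \ref{lem:G^d bound}, together with Cauchy's formula on small polydisks about $q_m = 1$, should supply the required bounds.
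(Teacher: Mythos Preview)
Your approach is correct and shares with the paper both the inductive scheme on $d$ and the use of the difference equation
\[
\Psi^d(\bs;w+1)-\Psi^d(\bs;w)=\frac{\Psi^{d-1}(\bs^{[1]};w)}{(w+1)^{s_1}}
\]
to propagate holomorphy in $w$ from the half-plane $\Re(w)>-1$ to all of $\C\setminus\R_{\le -1}$. The genuine difference is in the $\bs$-extension. The paper does it in one stroke by deforming each $x_m$-integral in \eqref{eq:Psi integral} to a Hankel contour, obtaining the closed expression \eqref{eq:Psi Hankel}, which is visibly entire in every $s_m$ because the only $\bs$-dependence sits in the factors $\Gamma(1-s_m)$ and $x_m^{s_m-1}$ along a contour avoiding the branch point. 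Lemma~\ref{lem:G^d bound} is invoked once, to justify the contour shift. Your route---subtracting a Taylor jet of $G(e^{-\bx};w)$ at $x_m=0$, integrating the polynomial part to get Pochhammer factors $s_m(s_m+1)\cdots(s_m+k-1)$, and bounding the remainder---is the classical ``Mellin continuation by subtraction'' method and is perfectly valid here since $G$ is holomorphic across $q_m=1$ by Proposition~\ref{prop3.2}. It is more elementary (no contour integration) but, as you note, the iteration over $m$ forces you to track holomorphic dependence and uniform bounds of the Taylor coefficients and remainders in the remaining $x_n$'s and in $w$; Cauchy estimates on a circle $|q_m-1|=r$ combined with Lemma~\ref{lem:G^d bound} do supply this, but the bookkeeping is heavier than the paper's single contour deformation. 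In short: same architecture, different engine for the $\bs$-continuation; the paper's Hankel argument is shorter, yours avoids complex contours at the cost of more explicit estimates.
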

\begin{proof}
Let us consider the second integral expression in \eqref{eq:Psi integral} 
for $(\bs;w)$ such that $\Re(s_i)>0$ and $\Re(w)>-1$. 
Just as in the well-known proof of the analytic continuation of the Riemann zeta function, 
we change the domain of integration to the $d$-fold product of the Hankel contour $\gamma$. 
This is possible by Lemma \ref{lem:G^d bound}, and thus we obtain 
\begin{equation}\label{eq:Psi Hankel}
\Psi(\bs;w)=\prod_{m=1}^d\frac{\Gamma(1-s_m)}{2\pi i\,e^{\pi is_m}}
\int_\gamma\cdots\int_\gamma 
G(e^{-x_1},\ldots,e^{-x_d};w)\prod_{m=1}^d(x_m^{s_m-1}e^{-x_m})\,dx_1\cdots dx_d.
\end{equation}
This expression gives the holomorphic continuation to $(\bs;w)\in\C^d\times\{\Re(w)>-1\}$. 

To obtain the continuation with respect to $w$, we proceed by induction on $d$. 
There is no problem for $d=0$. 
When $d>0$, the definition \eqref{eq:Psi(s;w) defn} implies the difference equation 
\begin{equation}\label{eq:Psi difference}
\Psi^d(\bs;w+1)-\Psi^d(\bs;w)=\frac{\Psi^{d-1}(\bs^{[1]};w)}{(w+1)^{s_1}}. 
\end{equation}
By using this equation and the induction hypothesis on $\Psi^{d-1}$, 
we can extend the domain of definition of $\Psi^d$ recursively. This completes the proof. 
\end{proof}

\begin{rem}
From the above argument, one can also see that $\Psi^d(\bs;w)$ can be continued 
to a multi-valued holomorphic function on $\C^d\times(\C\setminus\Z_{\le -1})$. 
Moreover, for $\bl\in\Z^d$, the monodromy around negative integers become trivial and 
$\Psi^d(\bl;w)$ is a single-valued meromorphic function with poles at $w=-1,-2,\ldots$. 
This is the interpolating function studied by Kusunoki and the second author \cite{KN}. 
\end{rem}

\section{Harmonic product}\label{sec:HP}
In this section, we discuss the \emph{harmonic product rule}, or the \emph{harmonic relation}, 
for the functions $\Psi^d(\bs;w)$, the simplest nontrivial example of which is 
\begin{equation}\label{eq:HP Psi example}
\Psi(t;w)\Psi(s;w)=\Psi(t,s;w)+\Psi(s,t;w)+\Psi(t+s;w). 
\end{equation}
There is the corresponding harmonic relation for the integrands $G(\bq;w)$. For example, 
the above relation \eqref{eq:HP Psi example} corresponds to 
\begin{equation}\label{eq:HP G example}
G(p;w)G(q;w)=G(p,q;w)+G(q,p;w)+G(pq;w), 
\end{equation}
which can be verified by explicitly computing both sides. 
Note that \eqref{eq:HP G example} implies \eqref{eq:HP Psi example} 
through the integral representation \eqref{eq:Psi integral}; 
the term $G(pq;w)$ is integrated via the change of variables $(p,q)=(r^x,r^{1-x})$ as 
\begin{align*}
&\frac{1}{\Gamma(t)\Gamma(s)}\iint_0^1 G(pq;w)(-\log p)^{t-1}(-\log q)^{s-1}dp\,dq\\
&=\frac{1}{\Gamma(t)\Gamma(s)}\iint_0^1 G(r;w)(-x\log r)^{t-1}(-(1-x)\log r)^{s-1}(-\log r)dxdr\\
&=\frac{1}{\Gamma(t)\Gamma(s)}B(t,s) \int_0^1 G(r;w)(-\log r)^{t+s-1}dr\\
&=\Psi(t+s;w). 
\end{align*}
The general harmonic relations for $\Psi(\bs;w)$ and $G(\bq;w)$ are formulated as follows. 

\begin{defn}\label{defn:HP}
\begin{enumerate}
\item 
Let $\frH_s$ denote the $\Q$-vector space freely generated by all tuples $\bs=(s_1,\ldots,s_d)$ 
of complex numbers $s_i$: 
\[\frH_s\coloneqq\bigoplus_{d\ge 0}\bigoplus_{\bs\in\C^d}\Q\ \bs.\] 
The \emph{harmonic product} $*$ is a bilinear product on $\frH_s$ defined inductively by 
\begin{gather*}
\varnothing*\bs=\bs*\varnothing\coloneqq\bs, \\
\bt*\bs\coloneqq (t_1,\bt^{[1]}*\bs)+(s_1,\bt*\bs^{[1]})+(t_1+s_1,\bt^{[1]}*\bs^{[1]}), 
\end{gather*}
where $\bt=(t_1,\ldots,t_c)$ and $\bs=(s_1,\ldots,s_d)$ (recall that $\bs^{[1]}$ stands for $(s_2,\ldots,s_d)$). 
This equips $\frH_s$ with a commutative $\Q$-algebra structure, with the empty tuple $\varnothing$ 
being the unit element (cf.~\cite{Ho}).

\item 
Let $\frH_q$ denote the $\Q$-vector space freely generated by all tuples $\bq=(q_1,\ldots,q_d)$ 
of positive real numbers $q_i$: 
\[\frH_q\coloneqq\bigoplus_{d\ge 0}\bigoplus_{\bq\in(\R_{>0})^d}\Q\ \bq. \]
The \emph{harmonic product} $\astb$ is a bilinear product on $\frH_q$ defined inductively by 
\begin{gather*}
\varnothing\astb\bq=\bq\astb\varnothing\coloneqq\bq, \\
\bp\astb\bq\coloneqq (p_1,\bp^{[1]}\astb\bq)+(q_1,\bp\astb\bq^{[1]})+(p_1q_1,\bp^{[1]}\astb\bq^{[1]}), 
\end{gather*}
where $\bp=(p_1,\ldots,p_c)$ and $\bq=(q_1,\ldots,q_d)$.  
This equips $\frH_q$ with a commutative $\Q$-algebra structure, with the empty tuple $\varnothing$ 
being the unit element. 

\item 
We extend the maps $\bs\mapsto \Psi(\bs;w)$ and $\bq\mapsto G(\bq;w)$ to linear maps 
\[\Psi(-;w)\colon\frH_s\to\C, \qquad G(-;w)\colon\frH_q\to\C,\] 
respectively. 
\end{enumerate}
\end{defn}

\begin{thm}\label{thm:HP}
The linear maps $\Psi(-;w)\colon\frH_s\to\C$ and $G(-;w)\colon\frH_q\to\C$ are $\Q$-algebra homomorphisms 
with respect to the harmonic product. In other words, the harmonic relations  
\[\Psi(\bt;w)\Psi(\bs;w)=\Psi(\bt*\bs;w),\qquad G(\bp;w)G(\bq;w)=G(\bp\astb\bq;w)\]
hold. 
\end{thm}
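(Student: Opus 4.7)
The plan is two-stage: first prove the harmonic relation $G(\bp;w)G(\bq;w)=G(\bp\astb\bq;w)$ directly, and then transport it through the Mellin representation of Theorem~\ref{thm:Psi integral} to obtain $\Psi(\bt;w)\Psi(\bs;w)=\Psi(\bt*\bs;w)$. This is the natural order because the $G$-side is algebraic (and in fact rational in $\bq$), while the $\Psi$-side is obtained from it by an integral transform that already knows how to merge monomials.

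For the $G$-relation, I would induct on the total length $c+d$. The awkwardness is that the defining recurrence \eqref{eq:G^d} peels off the last variable while the inductive definition of $\astb$ peels off the first. The cleanest route is to use the series expansion \eqref{eq:G^d series} on the polydisc $\abs{q_i}<1$: expanding both factors in $G(\bp;w)G(\bq;w)$ gives a double sum indexed by pairs of index tuples of the form $0\le j_1\le\cdots\le j_a>j_{a+1}>\cdots>j_c\ge 0$ and similarly for the $k$'s, weighted by $(-1)^{a+b}(p_1\cdots p_a)^w(q_1\cdots q_b)^w$. Partitioning this double sum according to the relative order of the $j$'s and $k$'s, with equal pairs producing the merged entries $p_iq_j$, reproduces precisely the quasi-shuffle structure that defines $\bp\astb\bq$, while the exponential prefactors combine multiplicatively to match those of $G(\bp\astb\bq;w)$. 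An equivalent option is to first derive the symmetric last-entry recurrence for $\astb$,
\[
\bp\astb\bq=(\bp_{[c-1]}\astb\bq,p_c)+(\bp\astb\bq_{[d-1]},q_d)+(\bp_{[c-1]}\astb\bq_{[d-1]},p_cq_d),
\]
which then matches \eqref{eq:G^d} slot-by-slot and permits a direct induction.

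For the $\Psi$-relation, apply Theorem~\ref{thm:Psi integral} in the region $\Re(t_i),\Re(s_j)>1$ to write
\[
\Psi(\bt;w)\Psi(\bs;w)=\frac{1}{\prod_i\Gamma(t_i)\prod_j\Gamma(s_j)}\int_{(0,1)^{c+d}}G(\bp;w)G(\bq;w)\prod_i(-\log p_i)^{t_i-1}\prod_j(-\log q_j)^{s_j-1}\,d\bp\,d\bq,
\]
and substitute the $G$-identity. For a pure shuffle term the integral directly reconstructs $\Psi$ of the shuffled exponent tuple. For each merged slot $p_iq_j$, the change of variables $(p_i,q_j)=(r^x,r^{1-x})$ exhibited in the example following \eqref{eq:HP G example} produces the Beta factor $\Gamma(t_i)\Gamma(s_j)/\Gamma(t_i+s_j)$, which cancels the extra Gamma denominators and collapses the two variables into a single one with Mellin exponent $t_i+s_j-1$. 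A term-by-term comparison with the recursive definition of $*$ then yields $\Psi(\bt*\bs;w)$ in the convergence region, and the identity propagates to the full domain by the analytic continuation established in the previous section.

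The main obstacle is the $G$-identity itself, specifically the mismatch between the last-entry recurrence of $G$ and the first-entry recurrence of $\astb$. The series expansion circumvents this cleanly, and the Hopf-algebra viewpoint advertised at the beginning of the section presumably packages the same combinatorics as a morphism between two quasi-shuffle Hopf algebras on $\frH_s$ and $\frH_q$, giving both harmonic relations in a parallel, structural way. Once the $G$-case is in hand, the $\Psi$-case is essentially bookkeeping on the Mellin side.
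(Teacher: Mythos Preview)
Your outline is a valid proof, but it follows what the paper itself calls method~(1) rather than the method the paper actually carries out. The paper chooses its method~(3): it equips $\frH_s$ and $\frH_q$ with Hopf algebra structures (harmonic product, deconcatenation coproduct, antipode $R$), introduces the auxiliary algebra homomorphisms $H_w\colon\frH^\circ_q\to\C$ and $Z_w\colon\frH^\circ_s\to\C$, and then observes from \eqref{eq:G^d series} and \eqref{eq:Psi Hurwitz} that
\[
G(-;w)=(H_wR)\odot H_0,\qquad \Psi(-;w)=(Z_wR)\odot Z_0,
\]
as convolution products. Since the convolution of algebra homomorphisms from a Hopf algebra to a commutative algebra is again an algebra homomorphism, both harmonic relations drop out simultaneously, with no Mellin transport needed for $\Psi$.

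Your direct series argument for $G$ is essentially this Hopf argument unpacked, but your sketch underplays the combinatorial work: the sums in \eqref{eq:G^d series} carry mixed $\le$/$>$ conditions together with a break-point-dependent prefactor $(q_1\cdots q_l)^w$, so a naive ``partition by relative order'' is not the standard quasi-shuffle merge and needs the convolution factorization (separating the $\le$-part, which is $H_wR$, from the $>$-part, which is $H_0$) to become transparent. Your Mellin-side derivation of the $\Psi$-relation from the $G$-relation is correct and is exactly the computation the paper exhibits after \eqref{eq:HP G example}. What the paper's route buys is a uniform, essentially one-line argument for both identities with no integral manipulations; what your route buys is independence from the Hopf formalism, at the cost of the longer direct verification the paper warns is ``not short.''
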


We comment on several possible methods to prove these harmonic relations: 
\begin{enumerate}
\item 
One can show the harmonic relation of $G(-;w)$ directly (not shortly, though) from the definition. 
Then it is possible to deduce the harmonic relation of $\Psi(-;w)$ 
by the integral expression \eqref{eq:Psi integral}, 
just as in the case of \eqref{eq:HP Psi example} and \eqref{eq:HP G example}. 

\item 
Conversely, the harmonic relation of $\Psi(-;w)$ implies that of $G(-;w)$ via the inverse Mellin transform. 
To show the former, one might use the same argument as Kawashima's proof of the harmonic relation 
of Kawashima functions \cite[Theorem 5.3]{Ka}: 
if the interpolation $\Psi(\bs;w)$ of the values at non-negative integers $w=N$ is unique in some sense, 
then the relation of $\Psi(-;w)$ follows from that of $\Psi(\bs;N)=\zeta_N(\bs)$, which is evident. 
In Kawashima's proof, the uniqueness of the interpolation was provided by the theory of Newton series interpolation. 

\item 
By the expression \eqref{eq:Psi Hurwitz}, one can deduce the harmonic relation of $\Psi(-;w)$ 
from those of $\zeta^\star(-;w)$ and $\zeta(-)$. 
The harmonic relation of $G(-;w)$ is obtained in a parallel way via the expression \eqref{eq:G^d series}. 
In fact, this implication is purely algebraic once a Hopf algebra structure is recognized. 
\end{enumerate}
In what follows, we explain the third method in more detail. 

Let $(X,\,\cdot\,)$ be an abelian semigroup with an operation $^^ ^^ \cdot"$.
Let $\frH=\frH(X)$ denote the $\Q$-vector space freely generated by all tuples $\bx=(x_1,\ldots,x_d)$ 
of elements $x_i\in X$: 
\[\frH(X)\coloneqq\bigoplus_{d\ge 0}\bigoplus_{\bx\in X^d}\Q\ \bx.\] 
$\frH(X)=(\frH(X),*,\eta,\Delta,\varepsilon,R)$ will be a commutative Hopf algebra over $\Q$ 
with following operators (see, e.g., \cite{Ho2}): 
\begin{align*}
* &\colon \frH\otimes\frH \longrightarrow \frH \quad(\text{product})\\
\intertext{
defined inductively by 
\[
\varnothing*\bx=\bx*\varnothing\coloneqq\bx, \quad
\bx*\by\coloneqq (x_1,\bx^{[1]}*\by)+(y_1,\bx*\by^{[1]})+(x_1y_1,\bx^{[1]}*\by^{[1]}), 
\]
}
\eta &\colon \Q\longrightarrow\frH, \quad\eta(1)=\emptyset \quad (\text{unit}), \\
\Delta &\colon \frH\longrightarrow\frH \otimes\frH, \quad\Delta(\bx)=\sum_{l=0}^d(x_1,\dots,x_l)\otimes(x_{l+1},\dots,x_d) \quad(\text{coproduct}),\\
\varepsilon &\colon \frH\longrightarrow \Q, \quad\varepsilon(\bx)=\begin{cases}0 &(\bx\ne \emptyset),\\ 1&(\bx=\emptyset),\end{cases} \quad(\text{counit}),\\
R&\colon\frH\longrightarrow \frH, \quad R(\bx)=(-1)^d(x_d,\dots,x_1)^{\star} \quad(\text{antipode})
\end{align*}
where 
\[
(x_1,\dots,x_d)^{\star}=\sum_{\square=\,,\,\text{or}\,\cdot}(x_1\,\square\,\dots\,\square\,x_d). 
\]

Let $\Hom(\frH(X),\C)$ be the $\Q$-vector space of all $\Q$-linear maps from $\frH(X)$ to $\C$.  
For $F_1, F_2\in\Hom(\frH(X),\C)$, let $F_1\odot F_2\in\Hom(\frH(X),\C)$ be a convolution product of $F_1$ and $F_2$ defined by 
\[
F_1\odot F_2= m(F_1\otimes F_2)\Delta\colon\frH\overset{\Delta}{\longrightarrow}\frH\otimes\frH
\xrightarrow{F_1\otimes F_2}\C\otimes\C\overset{m}{\longrightarrow}\C
\]
with the usual product $m$ in $\C$.

When the semigroup $(X,\,\cdot\,)$ is the additive group $(\C, \,+\,)$ or 
the multiplicative group $(\R_{>0},\,\cdot\ )$, we have $\frH(X)=\frH_s$ or $\frH_q$, respectively.
Actually, we use Hopf subalgebras of them defined as follows: 
\begin{align*}
&\frH^\circ_s\coloneqq \frH\bigl(\{s\in\C\mid\Re(s)>1\},+\bigr)\subset \frH_s, \\
&\frH^\circ_q\coloneqq \frH\bigl(\{q\in\R^\times\mid 0<q<1\},\,\cdot\,\bigr)\subset \frH_q. 
\end{align*}
Indeed, by analyticity, it is sufficient to prove Theorem \ref{thm:HP} 
on these subalgebras.

For $w\in\C\setminus\R_{\le -1}$, let $H_w\colon\frH^\circ_q\longrightarrow\C$ 
and $Z_w\colon\frH^\circ_s\longrightarrow\C$ 
be the $\Q$-linear maps defined by 
\begin{alignat*}{2}
H_w(q_1,\dots,q_d)&\coloneqq\sum_{j_1>\dots>j_d\geq 0}q_1^{j_1+w}\dots q_d^{j_d+w}
& &(0<q_i<1),\\
Z_w(s_1,\dots,s_d)&\coloneqq\sum_{j_1>\dots>j_d> 0}\dfrac{1}{(w+j_1)^{s_1}\dots (w+j_d)^{s_d}}
&\quad &(s_i\in\C,\,\Re(s_i)>1).  
\end{alignat*}

\begin{prop}
The linear maps $G(-;w)$ and $\Psi(-;w)$ are represented by using  
convolution product, that is, 
\[
G(-;w)=(H_w R)\odot H_0, 
\quad
\Psi(-;w)=(Z_w R)\odot Z_0.
\]
\end{prop}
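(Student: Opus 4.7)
The plan is to recognize both Proposition~\ref{prop:Psi Hurwitz} and the series expansion \eqref{eq:G^d series} as already having the shape of a convolution product: the summation index plays the role of the cut in the coproduct $\Delta$, the prefix factor matches $Z_w R$ (resp.\ $H_w R$), and the suffix factor matches $Z_0$ (resp.\ $H_0$).

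For $\Psi$, I first observe the key identity $Z_w R(\bs_{[i]}) = (-1)^i \zeta^\star(s_i,\ldots,s_1;w)$ on $\frH^\circ_s$. Since $R(\bs_{[i]}) = (-1)^i(s_i,\ldots,s_1)^\star$, this reduces to
\[
Z_w\bigl((s_i,\ldots,s_1)^\star\bigr) = \sum_{\square} Z_w(s_i\,\square\,\cdots\,\square\,s_1) = \zeta^\star(s_i,\ldots,s_1;w),
\]
which is the standard decomposition of the weak chain $j_1 \ge \cdots \ge j_i > 0$ into strict chains according to which inequalities are equalities: a block of equal indices contributes a single strict index whose corresponding $s$-exponent is the sum of the exponents in the block, matching the ``$+$'' in the star operator. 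Substituting into Proposition~\ref{prop:Psi Hurwitz} and using $Z_0(\bs^{[i]}) = \zeta(s_{i+1},\ldots,s_d)$ yields
\[
\Psi(\bs;w) = \sum_{i=0}^d Z_w R(\bs_{[i]}) \cdot Z_0(\bs^{[i]}) = m\bigl((Z_w R)\otimes Z_0\bigr)\Delta(\bs) = (Z_w R)\odot Z_0\,(\bs).
\]

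The argument for $G$ is parallel. The inner double sum in \eqref{eq:G^d series} factors at the cut $l$ into a weak part $\sum_{0\le j_1\le\cdots\le j_l} q_1^{j_1}\cdots q_l^{j_l}$ and a strict part equal to $H_0(\bq^{[l]})$. Reversing indices via $k_i = j_{l+1-i}$ recasts the weak part as $\sum_{k_1\ge\cdots\ge k_l\ge 0} q_l^{k_1}\cdots q_1^{k_l}$; multiplying by $(q_1\cdots q_l)^w$ shifts every exponent by $w$. The analogous block-equalities decomposition, now multiplicative (equal consecutive exponents merge the corresponding $q$'s into their product), gives
\[
(q_1\cdots q_l)^w\sum_{k_1\ge\cdots\ge k_l\ge 0} q_l^{k_1}\cdots q_1^{k_l} = \sum_{\square} H_w(q_l\,\square\,\cdots\,\square\,q_1) = H_w\bigl((q_l,\ldots,q_1)^\star\bigr).
\]
Combined with the sign $(-1)^l$ in \eqref{eq:G^d series}, this is exactly $H_w R(\bq_{[l]})$, and summing over $l$ expresses $G(\bq;w)$ as $(H_w R)\odot H_0\,(\bq)$.

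The only substantive step in either proof is the star-stuffle identity above, relating weak-inequality sums to sums over the star operator; this is really combinatorial bookkeeping and operates by the same mechanism in both settings (additive for the $s$-variables, multiplicative for the $q$-variables). Two minor checks remain: that the star operation stays inside the Hopf subalgebras $\frH^\circ_s$ and $\frH^\circ_q$ (immediate, since $\Re(s_i+s_j) > 2 > 1$ and $0 < q_iq_j < 1$), and that absolute convergence of all the series in play justifies the manipulations on these subalgebras, after which the identities extend by analyticity as already noted in the text.
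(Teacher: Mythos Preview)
Your proof is correct and is precisely the expansion of the paper's one-line argument (``These equalities hold immediately from \eqref{eq:G^d series} and \eqref{eq:Psi Hurwitz}''): you make explicit the star--stuffle identity $Z_w\bigl((s_i,\ldots,s_1)^\star\bigr)=\zeta^\star(s_i,\ldots,s_1;w)$ and its multiplicative analogue for $H_w$, which are exactly what is needed to match the coproduct cut in $\Delta$ with the summation index in \eqref{eq:Psi Hurwitz} and \eqref{eq:G^d series}. There is no substantive difference in approach.
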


\begin{proof}
These equalities hold immediately from  \eqref{eq:G^d series} and \eqref{eq:Psi Hurwitz}.
\end{proof} 

\begin{proof}[Proof of Theorem \ref{thm:HP}] 
It is easily verified that the map $H_w\colon\frH^\circ_q\longrightarrow\C$ is a $\Q$-algebra homomorphism, 
and the antipode $R$ is also a $\Q$-algebra automorphism of $\frH^\circ_q$ (since $*$ is commutative). 
Since the set $\Alg(\frH(X),\C)$ of $\Q$-algebra homomorphisms from $\frH(X)$ to $\C$ 
becomes a group under the convolution product $\odot$ (cf.\ Theorem 4.0.5 of \cite{sweedler}), 
$G(-;w)=(H_w R)\odot H_0\in\Alg(\frH^\circ_q,\C)$ holds. 
Similarly, we obtain $\Psi(-;w)=(Z_w R)\odot Z_0\in\Alg(\frH^\circ_s,\C)$. 
Thus, we have Theorem \ref{thm:HP}. 
\end{proof}

\section{Further properties of $G^d(\bq;w)$}
In this section, we study several properties of $G^d({\bq};w)$, 
for example, the Newton series expansion, special values at ${\bq}=(1,\dots,1)$, 
and additivity and multiplicativity on $w$.  

\subsection{Newton series expansion}
We introduce the following expression of $G^d({\bq};w)$ as the Newton series:
\begin{prop}
For $d\ge 1$ and $\bq=(q_1,\ldots,q_d)\in U_d$ with $\abs{1-q_1\cdots q_l}<1$  ($l=1,\dots,d$), it holds 
\[
G^d({\bq};w)=\left(\prod_{l=1}^{d-1}q_l^{d-l}\right)
\sum_{j=0}^{\infty}
\binom{w}{d+j}(-1)^j
\sum_{\substack{i_1,\ldots,i_d\ge 0\\ i_1+\dots+i_d=j}}
\prod_{l=1}^{d}(1-q_1\dots q_{l})^{i_{l}}.
\]
\end{prop}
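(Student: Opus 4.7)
The plan is to proceed by induction on $d$, driven by the recurrence \eqref{eq:G^d}. Both sides of the claimed identity are holomorphic on the polydisc $\{|1-q_1\cdots q_l|<1\ \text{for}\ l=1,\ldots,d\}$: the left side by Proposition~\ref{prop3.2}, the right side by absolute convergence of the Newton series (justified below). Hence it is enough to verify the identity on the dense open subset where $q_d\ne 1$, since the $q_d=1$ case of \eqref{eq:G^d} then follows by continuity. Write $p_l\coloneqq q_1\cdots q_l$ throughout.

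For the base case $d=1$, expand $q^w = (1-(1-q))^w = \sum_{n\ge 0}\binom{w}{n}(-1)^n(1-q)^n$ with the binomial series, and divide:
\[
G^1(q;w) = \frac{1-q^w}{1-q} = \sum_{j=0}^\infty \binom{w}{j+1}(-1)^j(1-q)^j,
\]
which matches the claim (the empty prefactor $\prod_{l=1}^0 q_l^{1-l}$ equals $1$).

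For the inductive step, apply the hypothesis to both $G^{d-1}(\bq_{[d-1]};w)$ and $G^{d-1}(\bq_{[d-2]},q_{d-1}q_d;w)$ appearing in \eqref{eq:G^d}. The prefactor $\prod_{l=1}^{d-2}q_l^{d-1-l}$ and the binomial coefficients $\binom{w}{d-1+j}$ coincide in the two expansions, because replacing $q_{d-1}$ by $q_{d-1}q_d$ affects only the top partial product, changing $p_{d-1}$ into $p_d$. Each inner summand of the difference therefore carries the factor $(1-p_{d-1})^{i_{d-1}} - (1-p_d)^{i_{d-1}}$. Factor this using $a^n-b^n = (a-b)\sum_{a'+b'=n-1}a^{a'}b^{b'}$, together with $(1-p_{d-1})-(1-p_d) = -p_{d-1}(1-q_d)$: the explicit factor $1-q_d$ cancels the denominator in \eqref{eq:G^d}, leaving a residual factor $-p_{d-1}$ times $\sum_{a'+b'=i_{d-1}-1}(1-p_{d-1})^{a'}(1-p_d)^{b'}$.

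What remains is index bookkeeping: rename $a',b'$ as new summation indices $i_{d-1},i_d\ge 0$; absorb $p_{d-1}=q_1\cdots q_{d-1}$ into the prefactor so that $\prod_{l=1}^{d-2}q_l^{d-1-l}$ upgrades to $\prod_{l=1}^{d-1}q_l^{d-l}$; and shift $j\mapsto j+1$, turning $\binom{w}{d-1+j}(-1)^j$ into $\binom{w}{d+j}(-1)^{j+1}$, which cancels the residual minus sign and absorbs the constraint $i_{d-1}\ge 1$ into the enlarged composition $i_1+\cdots+i_d=j$ (all $i_l\ge 0$). The main obstacle is this combined sign-and-index bookkeeping, together with the Fubini interchange required to subtract the two Newton series termwise; both rest on absolute convergence, which follows from the polynomial growth of $|\binom{w}{d+j}|$ in $j$ against the geometric bound $\binom{j+d-1}{d-1}(\max_l|1-p_l|)^j$ valid throughout the polydisc.
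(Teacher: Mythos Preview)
Your proposal is correct and follows essentially the same route as the paper: induction on $d$ via the recurrence \eqref{eq:G^d}, the base case from the binomial expansion of $q^w$, and the inductive step from the identity $(1-p_{d-1})^{n}-(1-p_d)^{n}=-p_{d-1}(1-q_d)\sum_{a+b=n-1}(1-p_{d-1})^{a}(1-p_d)^{b}$. You add explicit justification for absolute convergence and handle the $q_d=1$ case by continuity, points the paper leaves implicit.
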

\begin{proof}
Since $q^w=(1-(1-q))^w=\sum_{j=0}^{\infty}\binom{w}{j}(-1)^j(1-q)^j$ 
by the binomial expansion, $1-q^w$ can be represented as  
\[
1-q^w=(1-q)\sum_{j=0}^{\infty}\binom{w}{j+1}(-1)^j(1-q)^j.
\]
This gives the claim for $d=1$. 
Then we proceed by induction on $d$. 
Substituting the induction hypothesis for $G^{d-1}$ 
into the definition \eqref{eq:G^d} of $G^d$, we obtain 
\begin{align*}
G^d(\bq;w)=
\left(\prod_{l=1}^{d-2}q_l^{d-1-l}\right)
\sum_{j=0}^\infty\binom{w}{d-1+j}(-1)^j
\sum_{\substack{i_1,\ldots,i_{d-1}\ge 0\\ i_1+\cdots+i_{d-1}=j}}
\prod_{l=1}^{d-2}(1-q_1\cdots q_l)^{i_l}&\\
\times\frac{(1-q_1\cdots q_{d-1})^{i_{d-1}}-(1-q_1\cdots q_d)^{i_{d-1}}}{1-q_d}.&
\end{align*}
Thus the claim for $G^d$ follows from the identity 
\begin{align*}
&\frac{(1-q_1\cdots q_{d-1})^{i_{d-1}}-(1-q_1\cdots q_d)^{i_{d-1}}}{1-q_d}\\
&=-q_1\cdots q_{d-1}\sum_{\substack{i'_{d-1},i'_d\ge 0\\ i'_{d-1}+i'_d=i_{d-1}-1}}
(1-q_1\cdots q_{d-1})^{i'_{d-1}}(1-q_1\cdots q_d)^{i'_d}. \qedhere
\end{align*}
\end{proof}

\subsection{Special values of $G^d({\bq};w)$ at some ${\bq}$} 
\begin{prop} 
For $\Re(w)>0$, we have
\[
\lim_{(q_1, \ldots,q_d)\to (0,\ldots, 0)} G^d(q_1,\ldots, q_d;w) 
=\begin{cases}
1& (d=1),\\
0& (d\ge 2). 
\end{cases}
\]
\end{prop}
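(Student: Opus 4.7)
The plan is to read off the limit directly from the explicit formula \eqref{eq:G^d explicit} of Proposition \ref{prop:G^d explicit}. Since a sufficiently small punctured neighborhood of $(0,\ldots,0)$ in $U_d$ avoids the divisor $D$ (as every product $q_i\cdots q_j$ is small, hence $\ne 1$), that formula applies term by term. I would rewrite the $l$-th summand by combining the two $q$-monomials:
\[
\frac{\prod_{l<k\le d}(q_{l+1}\cdots q_k)}{q_{l+1}\cdots q_d}
= q_{l+1}^{d-l-1}q_{l+2}^{d-l-2}\cdots q_{d-1}^{1}q_d^{0},
\]
a product with \emph{non-negative} exponents. Thus the $l$-th term factors as
\[
(-1)^l\,(q_1\cdots q_l)^w\cdot q_{l+1}^{d-l-1}\cdots q_{d-1}
\cdot\prod_{k=1}^{l}\frac{1}{1-q_k\cdots q_l}
\prod_{k=l+1}^{d}\frac{1}{1-q_{l+1}\cdots q_k},
\]
in which every $(1-q_i\cdots q_j)^{-1}$ tends to $1$ as $\bq\to 0$.

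Next I would treat the $l=0$ and the $l\ge 1$ terms separately. For $l=0$ the surviving $q$-monomial is $q_1^{d-1}q_2^{d-2}\cdots q_{d-1}$; this is the empty product when $d=1$ (giving the limit $1$), and contains the factor $q_1^{d-1}$ when $d\ge 2$ (forcing the limit $0$). For $1\le l\le d$ the key factor is $(q_1\cdots q_l)^w$: approaching the origin from within $U_d$ (say inside some $C_\theta^d$), $\arg(q_1\cdots q_l)$ stays bounded, so
\[
\bigl|(q_1\cdots q_l)^w\bigr|
= |q_1\cdots q_l|^{\Re(w)}\,e^{-\Im(w)\,\arg(q_1\cdots q_l)}\longrightarrow 0,
\]
because $\Re(w)>0$; the remaining factors are bounded, so the whole term vanishes in the limit. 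Adding up the contributions gives $1$ in the case $d=1$ and $0$ in the case $d\ge 2$.

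The computation itself is essentially bookkeeping. The one point that requires mild care, and the likely main obstacle in writing a clean proof, is the interpretation of $\bq\to (0,\ldots,0)$: one must restrict the approach to $U_d$ (equivalently, control the arguments of the $q_i$) so that the branches $q_i^w$ behave well. With this convention in place, the argument above is essentially immediate from Proposition \ref{prop:G^d explicit}.
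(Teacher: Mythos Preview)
Your proposal is correct and takes essentially the same approach as the paper: the paper's proof consists of the single line ``This follows from \eqref{eq:G^d explicit},'' and what you have written is precisely a careful unpacking of that citation. The one superfluous detail is the appeal to $C_\theta^d$ --- within $U_d$ the defining inequalities already give $\lvert\arg(q_1\cdots q_l)\rvert<\pi$, which is all you need to bound $\lvert(q_1\cdots q_l)^w\rvert$.
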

\begin{proof}
This follows from \eqref{eq:G^d explicit}. 
\end{proof}

\begin{prop}\label{binomial}
The following hold: 
\begin{align}
\label{lim11}
G^d(\underbrace{1,\ldots,1}_{d};w)
&=\binom{w}{d}, \\
\label{expl1q}
G^d(\underbrace{1,\ldots,1}_{d-1},q;w)
&=\frac{1}{(q-1)^d}\left(q^w-\sum_{l=0}^{d-1}\binom{w}{l}(q-1)^{l}\right). 
\end{align}
Here we assume that $q\ne 1$ in \eqref{expl1q}. 
\end{prop}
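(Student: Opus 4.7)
The plan is to derive both identities by specializing the Newton series expansion established in the previous proposition, since that expansion makes the appearance of the binomial coefficients transparent.

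For \eqref{lim11}, I would set $q_1 = \cdots = q_d = 1$ in the Newton series. The prefactor $\prod_{l=1}^{d-1} q_l^{d-l}$ equals $1$, and each factor $(1 - q_1 \cdots q_l)^{i_l}$ becomes $0^{i_l}$, which under the convention $0^0 = 1$ (equivalently, $0^{i_l} = \delta_{i_l, 0}$) forces $i_l = 0$ for every $l$. This in turn forces $j = i_1 + \cdots + i_d = 0$, so only the $j = 0$ summand survives and the series collapses to $\binom{w}{d}$.

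For \eqref{expl1q}, I would specialize to $q_1 = \cdots = q_{d-1} = 1$, $q_d = q$, initially assuming $\abs{q - 1} < 1$ so that the Newton series hypothesis is satisfied. The factors $(1 - q_1 \cdots q_l)^{i_l}$ with $l < d$ force $i_1 = \cdots = i_{d-1} = 0$, leaving $i_d = j$ and $(1-q)^j = (-1)^j (q-1)^j$, so the series collapses to $\sum_{j \ge 0} \binom{w}{d+j}(q-1)^j$. Multiplying through by $(q-1)^d$ gives the tail $\sum_{k \ge d}\binom{w}{k}(q-1)^k$, and subtracting this tail from the binomial expansion $q^w = (1 + (q-1))^w = \sum_{k \ge 0} \binom{w}{k}(q-1)^k$ (valid for $\abs{q-1} < 1$) produces the right-hand side of \eqref{expl1q}. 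Both sides are holomorphic in $q$ on the cut plane $\C \setminus \R_{\le 0}$ with $q \ne 1$ (the apparent pole at $q = 1$ on the right is removable, as its numerator vanishes to order at least $d$ there), so the identity extends from $\abs{q - 1} < 1$ to the full stated range by analytic continuation.

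I do not foresee a real obstacle; the entire argument is a direct specialization once the Newton series is in hand. The one subtle point is the $0^{i_l}$ convention used to kill terms in \eqref{lim11}, which is most safely justified by reading the Newton series combinatorially: the summand for a tuple $(i_1, \ldots, i_d)$ is present only when $i_l = 0$ for every index with $q_1 \cdots q_l = 1$, so no term is over- or under-counted.
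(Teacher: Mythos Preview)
Your argument is correct but takes a different route from the paper's.  The paper proves \eqref{lim11} and \eqref{expl1q} by simultaneous induction on $d$, using only the recursive definition \eqref{eq:G^d}: feeding the induction hypothesis for $G^{d-1}$ into \eqref{eq:G^d} immediately yields \eqref{expl1q}, and then letting $q\to 1$ and using the binomial expansion $q^w=\sum_{l\ge 0}\binom{w}{l}(q-1)^l$ gives \eqref{lim11}.  Your approach instead specializes the Newton series of the preceding proposition, which makes the binomial coefficients appear directly and avoids a fresh induction.  Your route is shorter and conceptually cleaner once the Newton series is in hand, while the paper's proof is self-contained relative to the bare definition of $G^d$ and independent of the Newton expansion.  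Your handling of the $0^{i_l}$ point is fine: the factor $(1-q_1\cdots q_l)^{i_l}$ with $i_l=0$ is identically $1$ as a function of $\bq$, so evaluating the convergent series at the center $\bq=(1,\ldots,1)$ simply returns its constant term $\binom{w}{d}$.
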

\begin{proof}

We prove \eqref{lim11} and \eqref{expl1q} simultaneously by induction on $d$. 
The case $d=1$ is obvious. For $d\geq 2$, by the definition of $G^d$ and the induction hypothesis, 
we have 
\begin{align*}
G^d(\underbrace{1,\ldots,1}_{d-1},q;w)
&=\frac{1}{1-q}\Biggl\{G^{d-1}(\underbrace{1,\ldots,1}_{d-1};w)
-G^{d-1}(\underbrace{1,\ldots,1}_{d-2},q;w)\Biggr\}\\
&=\frac{1}{1-q}\Biggl\{\binom{w}{d-1}-\frac{1}{(q-1)^{d-1}}
\biggl(q^w-\sum_{l=0}^{d-2}\binom{w}{l}(q-1)^l\biggr)\Biggr\}\\
&=\frac{1}{(q-1)^d}\Biggl(q^w-\sum_{l=0}^{d-1}\binom{w}{l}(q-1)^{l}\Biggr), 
\end{align*}
which shows \eqref{expl1q} for $G^d$. Moreover, by letting $q\to 1$, we obtain 
\[G^d(\underbrace{1,\ldots,1}_{d};w)
=\lim_{q\to 1}\frac{1}{(q-1)^d}
\Biggl(\sum_{l=0}^\infty\binom{w}{l}(q-1)^{l}-\sum_{l=0}^{d-1}\binom{w}{l}(q-1)^{l}\Biggr)
=\binom{w}{d}.\]
Thus \eqref{lim11} holds for $G^d$. 
\end{proof}

\begin{cor}
The generating function  of $G^d(\underbrace{1,\ldots,1}_{d-1},q;w)$ is given by
\begin{align*}
\sum_{d=1}^\infty G^d(\underbrace{1,\ldots,1}_{d-1},q;w)X^d=\frac{((1+X)^w-q^w)X}{1-q+X}. 
\end{align*}
\end{cor}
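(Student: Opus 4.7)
The plan is to substitute the closed form from Proposition~\ref{binomial}, equation \eqref{expl1q}, into the generating series and evaluate the resulting double sum by swapping the order of summation.

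First, I would set $Y \coloneqq X/(q-1)$, so that $(q-1)^l Y^l = X^l$, and rewrite
\[
\sum_{d=1}^\infty G^d(\underbrace{1,\ldots,1}_{d-1},q;w)X^d
= \sum_{d=1}^\infty Y^d\Biggl(q^w - \sum_{l=0}^{d-1}\binom{w}{l}(q-1)^l\Biggr).
\]
The first piece contributes $q^w \cdot \frac{Y}{1-Y}$ by the geometric series.

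For the second piece, I would interchange the two summations: the inner sum runs over $l$ from $0$ to $d-1$, so exchanging yields $l \geq 0$ and $d \geq l+1$. This gives
\[
\sum_{l=0}^\infty \binom{w}{l}(q-1)^l \sum_{d=l+1}^\infty Y^d
= \frac{Y}{1-Y}\sum_{l=0}^\infty \binom{w}{l}\bigl((q-1)Y\bigr)^l
= \frac{Y}{1-Y}(1+X)^w,
\]
where the last step uses $(q-1)Y=X$ and the binomial series $(1+X)^w=\sum_{l\ge 0}\binom{w}{l}X^l$.

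Combining, the generating function equals
\[
\frac{Y}{1-Y}\bigl(q^w-(1+X)^w\bigr)
= \frac{X/(q-1)}{1-X/(q-1)}\bigl(q^w-(1+X)^w\bigr)
= \frac{X\bigl((1+X)^w-q^w\bigr)}{1-q+X},
\]
as claimed. The main (minor) obstacle is justifying the interchange of summations and the use of the binomial series; this is handled by working in the formal power-series ring $\C[[X]]$, or by restricting to a neighborhood of $X=0$ with $\abs{X}<\min(1,\abs{q-1})$, where both series converge absolutely. The case $q=1$ follows by continuity in $q$ from \eqref{lim11}, or by observing that both sides of the identity are holomorphic in $q$ near $q=1$.
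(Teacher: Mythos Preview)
Your proof is correct and follows essentially the same route as the paper: substitute \eqref{expl1q}, split off the $q^w$ term as a geometric series, and interchange the order of summation in the remaining double sum to recognise the binomial series $(1+X)^w$. Your introduction of the shorthand $Y=X/(q-1)$ is purely notational, and your remarks on convergence and on the case $q=1$ are reasonable additions that the paper leaves implicit.
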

\begin{proof}
By \eqref{expl1q}, we have 
\[\sum_{d=1}^\infty G^d(\underbrace{1,\ldots,1}_{d-1},q;w)X^d
=\sum_{d=1}^\infty\biggl(\frac{X}{q-1}\biggr)^d\Biggl(q^w-\sum_{l=0}^{d-1}\binom{w}{l}(q-1)^l\Biggr). \]
Then the claim follows from the identities 
\[\sum_{d=1}^\infty\biggl(\frac{X}{q-1}\biggr)^d q^w=\frac{q^w X}{q-1-X}\]
and 
\begin{align*}
\sum_{d=1}^\infty\biggl(\frac{X}{q-1}\biggr)^d\sum_{l=0}^{d-1}\binom{w}{l}(q-1)^l
&=\sum_{l=0}^\infty\binom{w}{l}(q-1)^l\sum_{d=l+1}^\infty \biggl(\frac{X}{q-1}\biggr)^d\\
&=\sum_{l=0}^\infty\binom{w}{l}\frac{X^{l+1}}{q-1-X}=\frac{(1+X)^w X}{q-1-X}. \qedhere
\end{align*}
\end{proof}

\begin{prop}
Assume that $\bq=(q_1,\ldots,q_{d-1})\in U_{d-1}$ satisfies that $q_m\cdots q_{d-1}\ne 1$ 
for $m=1,\ldots,d-1$. Then 
\begin{align}
\label{expdif}
G^d(\bq,1;w)
&=\dfrac{(-1)^{d-1}w\prod_{m=1}^{d-1}q_m^w}{\prod_{m=1}^{d-1}(1-\prod_{k=m}^{d-1}q_k)} \\
\notag&\quad
+\sum_{l=1}^{d-1}\dfrac{(-1)^{d-1-l}\left(\prod_{m=l}^{d-1}q_m\right) 
G^l({\bq}_{[l-1]},\prod_{m=l}^{d-1}q_m;w)}
{\prod_{m=l}^{d-1}(1-\prod_{k=m}^{d-1}q_k)}. 
\end{align}
\end{prop}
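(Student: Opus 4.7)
My plan is to prove \eqref{expdif} by induction on $d\geq 2$, with the key step being an auxiliary recursion that reduces $G^d(\bq,1;w)$ to a smaller instance of the same shape.

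For the base case $d=2$, \eqref{eq:G^d} at $q_d=1$ gives
\[
G^2(q_1,1;w) = q_1\partial_{q_1} G^1(q_1;w) = q_1\partial_{q_1}\frac{1-q_1^w}{1-q_1} = \frac{-wq_1^w(1-q_1)+q_1(1-q_1^w)}{(1-q_1)^2},
\]
which is readily identified with the $d=2$ instance $\frac{-wq_1^w+q_1G^1(q_1;w)}{1-q_1}$ of \eqref{expdif}.

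For $d\geq 3$, the core of the argument is the auxiliary recursion
\[
G^d(\bq,1;w) = \frac{q_{d-1}}{1-q_{d-1}}\,G^{d-1}(\bq;w) - \frac{1}{1-q_{d-1}}\,G^{d-1}(q_1,\ldots,q_{d-3},q_{d-2}q_{d-1},1;w).
\]
To derive this, I start from $G^d(\bq,1;w) = q_{d-1}\partial_{q_{d-1}} G^{d-1}(\bq;w)$ (by \eqref{eq:G^d} at $q_d=1$) and apply \eqref{eq:G^d} to $G^{d-1}(\bq;w)$ in its last variable, obtaining $G^{d-1}(\bq;w)=(G^{d-2}(\bq_{[d-2]};w)-G^{d-2}(\bq_{[d-3]},q_{d-2}q_{d-1};w))/(1-q_{d-1})$. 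Differentiating in $q_{d-1}$ yields two contributions: one equals $\frac{q_{d-1}}{1-q_{d-1}}G^{d-1}(\bq;w)$ after back-substitution, while the other takes the form $-\frac{q_{d-2}q_{d-1}\,\partial_y G^{d-2}(\bq_{[d-3]},y;w)\big|_{y=q_{d-2}q_{d-1}}}{1-q_{d-1}}$; a second application of \eqref{eq:G^d} at $q_d=1$ identifies the numerator with $G^{d-1}(q_1,\ldots,q_{d-3},q_{d-2}q_{d-1},1;w)$.

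Once the auxiliary recursion is in place, the induction is completed by applying the inductive hypothesis to $G^{d-1}(\bq_{\text{new}},1;w)$ with $\bq_{\text{new}}=(q_1,\ldots,q_{d-3},q_{d-2}q_{d-1})\in U_{d-2}$. Setting $P_m=\prod_{k=m}^{d-1}q_k$, one checks that the partial products $\prod_{k=m}^{d-2}(\bq_{\text{new}})_k$ equal $P_m$ for all $m=1,\ldots,d-2$, so that the inductive expansion of $G^{d-1}(\bq_{\text{new}},1;w)$, multiplied by $-1/(1-P_{d-1})$, reproduces the $w$-term and the $l=1,\ldots,d-2$ summands of \eqref{expdif}. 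The leading term $\frac{q_{d-1}}{1-q_{d-1}}G^{d-1}(\bq;w)$ of the auxiliary recursion provides the missing $l=d-1$ summand, since $G^{d-1}(\bq;w)=G^{d-1}(\bq_{[d-2]},P_{d-1};w)$.

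The main obstacle is the clean derivation of the auxiliary recursion, which requires keeping track of the differentiation and of the two separate applications of \eqref{eq:G^d}. After that, the induction goes through smoothly thanks to the invariance of the partial products $P_m$ under the contraction $(q_{d-2},q_{d-1})\mapsto q_{d-2}q_{d-1}$.
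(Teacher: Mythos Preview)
Your proof is correct and follows essentially the same route as the paper. The paper writes $G^d(\bq,1;w)=\bigl(\partial_{q_d}G^{d-1}(\bq_{[d-2]},q_{d-1}q_d;w)\bigr)\big|_{q_d=1}$ and then, by one differentiation of the defining recursion \eqref{eq:G^d}, obtains for $l=d-1,d-2,\ldots,1$ the identity
\[
\Bigl(\partial_{q_d}G^l(\bq_{[l-1]},q_l\cdots q_{d-1}q_d;w)\Bigr)\Big|_{q_d=1}
=\frac{P_l}{1-P_l}\,G^l(\bq_{[l-1]},P_l;w)
-\frac{1}{1-P_l}\Bigl(\partial_{q_d}G^{l-1}(\bq_{[l-2]},q_{l-1}\cdots q_{d-1}q_d;w)\Bigr)\Big|_{q_d=1},
\]
which it then unrolls. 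Since $\bigl(\partial_{q_d}G^l(\bq_{[l-1]},q_l\cdots q_{d-1}q_d;w)\bigr)\big|_{q_d=1}=G^{l+1}(\bq_{[l-1]},P_l,1;w)$, this is exactly your auxiliary recursion, and your induction on $d$ with the contracted tuple $\bq_{\text{new}}$ is just the paper's unrolling of this recursion in $l$, rephrased.
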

\begin{proof}
By definition \eqref{eq:G^d}, it holds that 
\[G^d(\bq,1;w)=\biggl(\frac{\partial}{\partial q_d}
G^{d-1}(\bq_{[d-2]},q_{d-1}q_d;w)\biggr)\bigg|_{q_d=1}. \]
We also have 
\begin{align*}
\biggl(\frac{\partial}{\partial q_d}
&G^l(\bq_{[l-1]},q_{l}\cdots q_{d-1}q_d;w)\biggr)\bigg|_{q_d=1}\\
&=\frac{q_{l}\cdots q_{d-1}}{1-q_{l}\cdots q_{d-1}}G^{l}(\bq_{[l-1]},q_{l}\cdots q_{d-1};w)\\
&\quad -\frac{1}{1-q_{l}\cdots q_{d-1}}
\biggl(\frac{\partial}{\partial q_d}
G^{l-1}(\bq_{[l-2]},q_{l-1}\cdots q_{d-1}q_d;w)\biggr)\bigg|_{q_d=1}
\end{align*}
for $l=2,\ldots,d-1$ and 
\begin{align*}
\biggl(\frac{\partial}{\partial q_d}
&G^1(q_1\cdots q_{d-1}q_d;w)\biggr)\bigg|_{q_d=1}\\
&=\frac{q_1\cdots q_{d-1}}{1-q_1\cdots q_{d-1}}G^1(q_1\cdots q_{d-1};w)
-\frac{w(q_1\cdots q_{d-1})^w}{1-q_1\cdots q_{d-1}}. 
\end{align*}
By combining these together, the formula \eqref{expdif} follows.  
\end{proof}

\subsection{Property of $G^d({\bq};w)$ as a function in $w$} 
We introduce the additivity and multiplicativity of $G^d({\bq};w)$ with respect to $w$. 

\begin{prop}\label{ad-prod}
Let $u, v\in\C$ and $\bq=(q_1,\ldots,q_d)\in U_d$, $d\ge 1$. 
\begin{enumerate}
\item 
It holds that 
\begin{align}
\label{additivity}
&G^{d}({\bq};u+v)\\
\notag&=G^d({\bq};u)+q_1^u G^d({\bq};v)
+q_1^u\sum_{l=1}^{d-1}G^{d-l}\left(\prod_{m=2}^{l+1}q_m,{\bq}^{[l+1]};u\right)
G^l(q_1q_2,{\bq}_{[l+1]}^{[2]};v), 
\end{align}
where ${\bq}_{[l]}^{[m]}\coloneqq({\bq}_{[l]})^{[m]}=(q_{m+1},\dots,q_l)$ for $0\leq m\leq l\leq d$. 
\item 
If $(q_1^u,\ldots,q_d^u)\in U_d$, it holds that 
\begin{align}
\label{multiplicativity}
&G^d({\bq};uv)\\
\notag 
&=\sum_{l=1}^d
\sum_{d=j_0\ge j_1\ge \cdots\ge j_l=l}
\prod_{n=1}^l
G^{j_{n-1}-j_n+1}
\left(\prod_{m=n}^{j_n} q_m,{\bq}_{[j_{n-1}]}^{[j_n]};u\right) 
G^l(q_1^u,\dots,q_l^u;v). 
\end{align}
\end{enumerate}
\end{prop}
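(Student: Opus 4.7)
Both identities can be proved by induction on $d$, using the defining recurrence \eqref{eq:G^d}. By Proposition \ref{prop3.2}, the functions involved are holomorphic on $U_d \times \C$, so it suffices to verify them on the open dense set where $q_d \neq 1$; the remaining case follows by continuity.

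For Part (i), the base case $d = 1$ is the elementary identity
\[\frac{1-q^{u+v}}{1-q} = \frac{1-q^u}{1-q} + q^u \cdot \frac{1-q^v}{1-q},\]
with the sum over $l$ being empty. For $d \geq 2$, the plan is to apply \eqref{eq:G^d} to $G^d(\bq; u+v)$ and to insert the inductive hypothesis into each of the two $G^{d-1}$ pieces. The resulting terms organize into pure-$u$, pure-$v$, and cross contributions. The pure-$u$ parts reassemble via \eqref{eq:G^d} into $G^d(\bq; u)$. The pure-$v$ part together with the cross contributions indexed by $l \leq d-2$ combine in a telescoping cascade: for each relevant $l$, the difference between the two hypothesis terms divided by $1 - q_d$ either yields, via \eqref{eq:G^d}, the target contribution $q_1^u G^{d-l}(q_2 \cdots q_{l+1}, \bq^{[l+1]}; u) G^l(q_1 q_2, \bq_{[l+1]}^{[2]}; v)$ directly, or splits into a telescoping pair that distributes between the $l$-th target term and the $(l+1)$-th. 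The cascade closes at $l = d-1$, and the pure-$v$ piece collapses to $q_1^u G^d(\bq; v)$.

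For Part (ii), the base $d = 1$ reduces to $G^1(q; uv) = G^1(q; u) G^1(q^u; v)$, matching the single tuple $(l, j_0, j_1) = (1, 1, 1)$. For the inductive step, I apply \eqref{eq:G^d} to $G^d(\bq; uv)$ and the hypothesis to each $G^{d-1}$ factor; each yields a sum over weakly decreasing sequences $d - 1 = j'_0 \geq \cdots \geq j'_{l'} = l'$. Target tuples with $j_1 < d$ arise from prepending $j_0 = d$ to a hypothesis tuple: the two hypothesis expansions have matching $v$-factors, and their $u$-side differences telescope via \eqref{eq:G^d} into the target's leading $G^{d - j_1 + 1}$ factor. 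Target tuples with $j_1 = d$ (which introduce a leading $G^1(q_1 \cdots q_d; u)$ on the $u$-side and extend the $v$-factor length by one) arise from boundary hypothesis tuples whose $v$-factors have differing last arguments $q_{d-1}^u$ versus $(q_{d-1}q_d)^u$; applying \eqref{eq:G^d} to the $v$-factor produces the extended $v$-factor, and the resulting prefactor $1/(1-q_d^u)$ combines with the surrounding $1/(1-q_d)$ via the elementary bridge $G^1(q_d; u) = (1-q_d^u)/(1-q_d)$.

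The main obstacle will be the combinatorial bookkeeping in Part (ii): ensuring that every target tuple is produced exactly once and that the arguments of the various $G$-factors match after all substitutions. The key technical device throughout is the identity $G^1(q; u) = (1-q^u)/(1-q)$, which bridges the two recurrences — the recurrence for $G^d$ in $q_d$ (with factor $1/(1-q_d)$), and the recurrence for $G^l$ in $q_l^u$ (with factor $1/(1-q_l^u)$). Once this bridge is in place, the inductive step generalizes the $d = 2$ and $d = 3$ verifications in a mechanical though intricate manner.
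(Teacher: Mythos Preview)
Your plan is correct and follows essentially the same route as the paper: induction on $d$ via the recurrence \eqref{eq:G^d}, with the identity $G^1(q;u)=(1-q^u)/(1-q)$ as the bridge in Part (ii). The paper's framing is marginally cleaner: rather than expanding the left side and reorganizing, it names the right-hand side $F^d$ (resp.\ $H^d$) and verifies directly that $(1-q_d)F^d(\bq)=F^{d-1}(\bq_{[d-1]})-F^{d-1}(\bq_{[d-2]},q_{d-1}q_d)$---the same algebraic identity you would end up checking, but stated as ``both sides satisfy the same recurrence.'' For Part (ii) the paper also makes your bookkeeping concern precise: it partitions the index set $J_d=\{(j_0,\ldots,j_l):d=j_0\ge\cdots\ge j_l=l\}$ into fibers $J_d(\bi)$ over $\bi\in J_{d-1}$, where $J_d(\bi)$ is obtained from $\bi$ by replacing some nonempty initial segment of $(d-1)$'s by $d$'s, together with the exceptional all-$d$ tuple of length $d+1$ attached to $\bi=(d-1,\ldots,d-1)$. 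Your ``$j_1<d$ versus $j_1=d$'' dichotomy is a coarsening of this and does not by itself give a one-to-one matching with hypothesis tuples (a single $\bi$ can spawn several $\bj$'s with $j_1=d$); the paper's finer partition is what makes the telescoping in the $u$-factors line up exactly, and your bridge identity appears precisely when handling the exceptional tuple.
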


\begin{proof}
(1) Let $F^d(\bq;u,v)$ denote the right-hand side of \eqref{additivity}. 
For $d=1$, we have 
\[F^1(q;u,v)=\frac{1-q^u}{1-q}+q^u\frac{1-q^v}{1-q}=\frac{1-q^{u+v}}{1-q}=G^1(q;u+v). \]
Thus it is sufficient to show that $F^d(\bq;u,v)$ satisfies the same recursive relation as $G^d(\bq;u+v)$, 
that is, it holds that 
\[(1-q_d)F^d(\bq;u,v)=F^{d-1}(\bq_{[d-1]};u,v)-F^{d-1}(\tilde{\bq}_{[d-1]};u,v)\]
for $d\ge 2$, where we set $\tilde{\bq}_{[d-1]}\coloneqq (q_1,\ldots,q_{d-2},q_{d-1}q_d)$. 
This is easily checked for $d=2$, and for $d\ge 3$, 
this follows from the identities below: 
\begin{align*}
&(1-q_d)G^d(\bq;u)=G^{d-1}(\bq_{[d-1]};u)-G^{d-1}(\tilde{\bq}_{[d-1]};u), \\
&(1-q_d)G^d(\bq;v)=G^{d-1}(\bq_{[d-1]};v)-G^{d-1}(\tilde{\bq}_{[d-1]};v), \\ 
&(1-q_d)G^{d-l}(q_2\cdots q_{l+1},\bq^{[l+1]};u)G^l(q_1q_2,\bq^{[2]}_{[l+1]};v)\\
&\qquad=G^{d-l-1}(q_2\cdots q_{l+1},\bq^{[l+1]}_{[d-1]};u)G^l(q_1q_2,\bq^{[2]}_{[l+1]};v)\\
&\qquad\quad -G^{d-l-1}(q_2\cdots q_{l+1},\tilde{\bq}^{[l+1]}_{[d-1]};u)G^l(q_1q_2,\bq^{[2]}_{[l+1]};v)
\end{align*}
for $1\le l\le d-3$, and 
\begin{align*}
(1-q_d)&\Bigl(G^2(q_2\cdots q_{d-1},q_d;u)G^{d-2}(q_1q_2,\bq^{[2]}_{[d-1]};v)\\
&\quad +G^1(q_2\cdots q_d;u)G^{d-1}(q_1q_2,\bq^{[2]}_{[d]};v)\Bigr)\\
&=\Bigl\{G^1(q_2\cdots q_{d-1};u)-G^1(q_2\cdots q_d;u)\Bigr\}G^{d-2}(q_1q_2,\bq_{[d-1]}^{[2]};v)\\
&\quad+G^1(q_2\cdots q_d;u)\Bigl\{G^{d-2}(q_1q_2,\bq^{[2]}_{[d-1]};v)
-G^{d-2}(q_1q_2,\tilde{\bq}^{[2]}_{[d-1]};v)\Bigr\}\\
&=G^1(q_2\cdots q_{d-1};u)G^{d-2}(q_1q_2,\bq^{[2]}_{[d-1]};v)\\
&\quad-G^1(q_2\cdots q_d;u)G^{d-2}(q_1q_2,\tilde{\bq}^{[2]}_{[d-1]};v). 
\end{align*}

(2) Let $H^d(\bq;u,v)$ denote the right-hand side of \eqref{multiplicativity}. 
For $d=1$, it holds 
\[H^1(q;u,v)=\frac{1-q^u}{1-q}\frac{1-(q^u)^v}{1-q^u}=\frac{1-q^{uv}}{1-q}=G^1(q;uv). \]
Thus, again, it is sufficient to show the relation 
\begin{equation}\label{eq:H^d recursion}
(1-q_d)H^d(\bq;u,v)=H^{d-1}(\bq_{[d-1]};u,v)-H^{d-1}(\tilde{\bq}_{[d-1]};u,v)
\end{equation}
for $d\ge 2$. In fact, we will prove a refinement of this identity. 

Let $J_d$ denote the set of sequences $\bj=(j_0,j_1,\ldots,j_l)$ satisfying $d=j_0\ge j_1\ge\cdots\ge j_l=l$. 
For such a sequence $\bj$, let $H^d(\bq;u,v)_\bj$ be the corresponding term in $H^d(\bq;u,v)$, 
that is, 
\[
H^d(\bq;u,v)_\bj\coloneqq \prod_{n=1}^l
G^{j_{n-1}-j_n+1}
\left(\prod_{m=n}^{j_n} q_m,{\bq}_{[j_{n-1}]}^{[j_n]};u\right) 
G^l(q_1^u,\dots,q_l^u;v). 
\]
For each sequence $\bi=(i_0,i_1,\ldots,i_l)\in J_{d-1}$, we define a subset $J_d(\bi)$ of $J_d$ as follows. 
When $l<d-1$, let $t$ be the minimum number with $i_t<d-1$, so that 
$\bi=(\underbrace{d-1,\ldots,d-1}_{t},i_t,\ldots,i_l)$. Then we set 
\[J_d(\bi)\coloneqq 
\Bigl\{(\underbrace{d,\ldots,d}_{s},\underbrace{d-1,\ldots,d-1}_{t-s},i_t,\ldots,i_l)
\Bigm| 1\le s\le t\Bigr\}. \]
When $l=d-1$, we set 
\begin{align*}
J_d(\underbrace{d-1,\ldots,d-1}_{d})&\coloneqq 
\Bigl\{(\underbrace{d,\ldots,d}_{s},\underbrace{d-1,\ldots,d-1}_{d-s})
\Bigm| 1\le s\le d-1\Bigr\}
\cup\Bigl\{(\underbrace{d,\ldots,d}_{d+1})\Bigr\}. 
\end{align*}
Then, since $J_d$ is the disjoint union of $J_d(\bi)$ where $\bi$ runs over $J_{d-1}$, 
the equality \eqref{eq:H^d recursion} follows from the identity 
\begin{equation}\label{eq:H^d bi}
\sum_{\bj\in J_d(\bi)}(1-q_d)H^d(\bq;u,v)_\bj=H^{d-1}(\bq_{[d-1]};u,v)_\bi
-H^{d-1}(\tilde{\bq}_{[d-1]};u,v)_\bi 
\end{equation}
for $\bi\in J_{d-1}$, which will be shown below. 

First we consider $\bi=(i_0,i_1,\ldots,i_l)$ with $l<d-1$. 
Let $t$ be the minimum number with $i_t<d-1$. 
For $\bj=(\underbrace{d,\ldots,d}_{s},\underbrace{d-1,\ldots,d-1}_{t-s},i_t,\ldots,i_l)\in J_d(\bi)$ 
with $1\le s\le t-1$, we have 
\begin{align*}
&(1-q_d)H^d(\bq;u,v)_{\bj}\\
&=(1-q_d)\prod_{n=1}^{s-1}G^1(q_n\cdots q_d;u)\cdot G^{2}(q_s\cdots q_{d-1},q_d;u) 
\cdot \prod_{n=s+1}^{t-1}G^1(q_n\cdots q_{d-1};u)\\
&\quad\times G^{d-i_t}(q_t\cdots q_{i_t},\bq^{[i_t]}_{[d-1]};u)
\cdot\prod_{n=t+1}^l G^{i_{n-1}-i_n+1}(q_n\cdots q_{i_n},\bq^{[i_n]}_{[i_{n-1}]};u)
\cdot G^l(q_1^u,\ldots,q_l^u;v)\\
&=\prod_{n=1}^{s-1}G^1(q_n\cdots q_d;u)
\cdot\Bigl\{G^1(q_s\cdots q_{d-1};u)-G^1(q_s\cdots q_d;u)\Bigr\}
\cdot\prod_{n=s+1}^{t-1}G^1(q_n\cdots q_{d-1};u)\\
&\quad\times G^{d-i_t}(q_t\cdots q_{i_t},\bq^{[i_t]}_{[d-1]};u)
\cdot\prod_{n=t+1}^l G^{i_{n-1}-i_n+1}(q_n\cdots q_{i_n},\bq^{[i_n]}_{[i_{n-1}]};u)
\cdot G^l(q_1^u,\ldots,q_l^u;v). 
\end{align*}
On the other hand, for $\bj=(\underbrace{d,\ldots,d}_{t},i_t,\ldots,i_l)$, we have 
\begin{align*}
&(1-q_d)H^d(\bq;u,v)_{\bj(t)}\\
&=(1-q_d)\prod_{n=1}^{t-1}G^1(q_n\cdots q_d;u) 
\cdot G^{d-i_t+1}(q_t\cdots q_{i_t},\bq^{[i_t]};u)\\
&\quad\times\prod_{n=t+1}^l G^{i_{n-1}-i_n+1}(q_n\cdots q_{i_n},\bq^{[i_n]}_{[i_{n-1}]};u)
\cdot G^l(q_1^u,\ldots,q_l^u;v)\\
&=\prod_{n=1}^{t-1}G^1(q_n\cdots q_d;u)
\cdot\Bigl\{G^{d-i_t}(q_t\cdots q_{i_t},\bq^{[i_t]}_{[d-1]};u)
-G^{d-i_t}(q_t\cdots q_{i_t},\tilde{\bq}^{[i_t]}_{[d-1]};u)\Bigr\}\\
&\quad\times\prod_{n=t+1}^l G^{i_{n-1}-i_n+1}(q_n\cdots q_{i_n},\bq^{[i_n]}_{[i_{n-1}]};u)
\cdot G^l(q_1^u,\ldots,q_l^u;v). 
\end{align*}
Summing these terms together, we obtain 
\begin{align*}
\sum_{\bj\in J_d(\bi)}(1-q_d)H^d(\bq;u,v)_\bj
&=\Biggl\{\prod_{n=1}^{t-1}G^1(q_n\cdots q_{d-1};u)\cdot 
G^{d-i_t}(q_t\cdots q_{i_t},\bq^{[i_t]}_{[d-1]};u)\\
&\qquad-\prod_{n=1}^{t-1}G^1(q_n\cdots q_{d};u)\cdot 
G^{d-i_t}(q_t\cdots q_{i_t},\tilde{\bq}^{[i_t]}_{[d-1]};u)\Biggr\}\\
&\quad\times\prod_{n=t+1}^l G^{i_{n-1}-i_n+1}(q_n\cdots q_{i_n},\bq^{[i_n]}_{[i_{n-1}]};u) 
\cdot G^l(q_1^u,\ldots,q_l^u;v)\\
&=H^{d-1}(\bq_{[d-1]};u,v)_\bi-H^{d-1}(\tilde{\bq}_{[d-1]};u,v)_\bi, 
\end{align*}
which establishes \eqref{eq:H^d bi} in the present case. 

Next we consider $\bi=(\underbrace{d-1,\ldots,d-1}_{d})\in J_{d-1}$. 
For $\bj=(\underbrace{d,\ldots,d}_{s},\underbrace{d-1,\ldots,d-1}_{d-s})$ with $1\le s\le d-1$, 
we have 
\begin{align*}
(1-q_d)H^d(\bq;u,v)_{\bj}
&=(1-q_d)\prod_{n=1}^{s-1}G^1(q_n\cdots q_d;u)\cdot G^2(q_s\cdots q_{d-1},q_d;u)\\
&\quad\times \prod_{n=s+1}^{d-1}G^1(q_n\cdots q_{d-1};u)\cdot G^{d-1}(q_1^u,\ldots,q_{d-1}^u;v)\\
&=\prod_{n=1}^{s-1}G^1(q_n\cdots q_d;u)\cdot 
\Bigl\{G^1(q_s\cdots q_{d-1};u)-G^1(q_s\cdots q_d;u)\Bigr\}\\
&\quad\times \prod_{n=s+1}^{d-1}G^1(q_n\cdots q_{d-1};u)\cdot G^{d-1}(q_1^u,\ldots,q_{d-1}^u;v). 
\end{align*}
On the other hand, for $\bj=(\underbrace{d,\ldots,d}_{d+1})$, we have 
\begin{align*}
&(1-q_d)H^d(\bq;u,v)_{\bj}\\
&=(1-q_d)\prod_{n=1}^{d}G^1(q_n\cdots q_d;u)\cdot G^d(q_1^u,\ldots,q_d^u;v)\\
&=\prod_{n=1}^{d-1}G^1(q_n\cdots q_d;u)\cdot(1-q_d^u) G^d(q_1^u,\ldots,q_d^u;v)\\
&=\prod_{n=1}^{d-1}G^1(q_n\cdots q_d;u)
\cdot\Bigl\{G^{d-1}(q_1^u,\ldots,q_{d-1}^u;v)-G^{d-1}(q_1^u,\ldots,(q_{d-1}q_d)^u;v)\Bigr\}. 
\end{align*}
Summing these terms together, we obtain 
\begin{align*}
\sum_{\bj\in J_d(\bi)}(1-q_d)H^d(\bq;u,v)_\bj
&=\prod_{n=1}^{d-1}G^1(q_n\cdots q_{d-1};u)\cdot G^{d-1}(q_1^u,\ldots,q_{d-1}^u;v)\\
&\quad-\prod_{n=1}^{d-1}G^1(q_n\cdots q_d;u)\cdot G^{d-1}(q_1^u,\ldots,(q_{d-1}q_d)^u;v)\\
&=H^{d-1}(\bq_{[d-1]};u,v)_\bi-H^{d-1}(\tilde{\bq}_{[d-1]};u,v)_\bi
\end{align*}
as desired. 
Thus we have proven \eqref{eq:H^d bi} for all $\bi\in J_{d-1}$, and hence \eqref{eq:H^d recursion}. 
\end{proof}

As a consequence of Proposition \ref{ad-prod}, 
the right-hand sides of \eqref{additivity} and \eqref{multiplicativity} 
turn out to be symmetric with respect to $u$ and $v$ 
(for the latter, we also need to assume $(q_1^v,\ldots,q_d^v)\in U_d$). 

\begin{ex}
In the case of $d=2$, 
\begin{align*}
G^2(q_1,q_2;u+v)&=G^2(q_1,q_2;u)+q_1^uG^2(q_1,q_2;v)+q_1^uG^1(q_2;u)G^1(q_1q_2;v)\\
&=G^2(q_1,q_2;v)+q_1^vG^2(q_1,q_2;u)+q_1^vG^1(q_2;v)G^1(q_1q_2;u).
\end{align*}
\end{ex}

\begin{ex}
In the case of $d=1$, 
\begin{align*}
G^1(q;uv)&=\frac{1-q^u}{1-q}\frac{1-q^{uv}}{1-q^u}=G^1(q;u)G^1(q^u;v)\\
&=\frac{1-q^v}{1-q}\frac{1-q^{uv}}{1-q^v}=G^1(q;v)G^1(q^v;u)
\end{align*}
hold. 
In cases of $d=2$ and $d=3$, one has 
\begin{align*}
&G^2(q_1,q_2;uv)=G^2(q_1,q_2;u)G^1(q_1^u;v)+G^1(q_1q_2;u)G^1(q_2;u)G^2(q_1^u,q_2^u;v),\\
&G^3(q_1,q_2,q_3;uv)=G^3(q_1,q_2,q_3;u)G^1(q_1^u;v)\\
&\qquad+\left(G^1(q_1q_2;u)G^2(q_2,q_3;u)+G^2(q_1q_2,q_3;u)G^1(q_2q_3;u)\right)G^2(q_1^u,q_2^u;v)\\
&\qquad+G^1(q_1q_2q_3;u)G^1(q_2q_3;u)G^1(q_3;u)G^3(q_1^u,q_2^u,q_3^u;v).
\end{align*}
\end{ex}

\begin{ex}
By \eqref{additivity} with $v=1$, we have the following difference formula: 
\[
G^d({\bq};u+1)-G^d({\bq};u)=q_1^u G^{d-1}({\bq}^{[1]};u). 
\]
Note that its Mellin transform yields the difference equation \eqref{eq:Psi difference} of $\Psi$. 
\end{ex}

\subsection{Applications to binomial coefficients}

The identity \eqref{lim11} allows us to deduce some formulas for binomial coefficients 
from corresponding ones for the function $G(\bq;w)$. 
We show the following Corollaries \ref{cor:binomial1} and \ref{cor:binomial2} 
as examples of this procedure. 
Though these formulas for binomial coefficients can be directly proven, 
it may be of some interest to notice such a relationship with our function $G(\bq;w)$.

\begin{cor}\label{cor:binomial1} 
For $u, v\in\C$ and $d\in\Z_{\geq 0}$, we have 
\[
\binom{u+v}{d}=\sum_{j=0}^d\binom{v}{j}\binom{u}{d-j}
\]
and
\[
\binom{uv}{d}
=\sum_{l=0}^d
\left(\sum_{\substack{i_1,\ldots,i_l\ge 1\\ i_1+\cdots+i_l=d}}
\binom{u}{i_1}\binom{u}{i_2}\cdots\binom{u}{i_l}\right)\binom{v}{l}. 
\]
\end{cor}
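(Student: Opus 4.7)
The plan is to specialize Proposition \ref{ad-prod} to the degenerate tuple $\bq = (1, \ldots, 1)$ and then invoke the evaluation $G^d(1, \ldots, 1; w) = \binom{w}{d}$ from \eqref{lim11}. With all $q_i = 1$ the factors $q_1^u$ and $q_1^v$ in \eqref{additivity} and \eqref{multiplicativity} become $1$, and the domain condition $(q_1^u, \ldots, q_d^u) \in U_d$ in part (2) is trivially satisfied, so no limit argument is needed.

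For the first identity, I would apply \eqref{additivity} at $\bq = (1, \ldots, 1)$. The left side becomes $\binom{u+v}{d}$. On the right, every factor $G^{d-l}\bigl(\prod_{m=2}^{l+1} q_m, \bq^{[l+1]}; u\bigr)$ has $d-l$ arguments, all equal to $1$, so it collapses to $\binom{u}{d-l}$; similarly $G^l\bigl(q_1 q_2, \bq^{[2]}_{[l+1]}; v\bigr)$ becomes $\binom{v}{l}$. The boundary terms $G^d(\bq;u) = \binom{u}{d}$ and $G^d(\bq;v) = \binom{v}{d}$ slot in as the $l=0$ and $l=d$ cases, and the Vandermonde identity $\binom{u+v}{d} = \sum_{l=0}^d \binom{v}{l}\binom{u}{d-l}$ drops out.

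For the second identity, I would specialize \eqref{multiplicativity} in the same way. Each factor $G^{j_{n-1} - j_n + 1}(1, \ldots, 1; u)$ collapses to $\binom{u}{j_{n-1} - j_n + 1}$, and $G^l(1, \ldots, 1; v)$ to $\binom{v}{l}$. The key combinatorial step is to recognize the bijection between decreasing sequences $d = j_0 \ge j_1 \ge \cdots \ge j_l = l$ and compositions $(i_1, \ldots, i_l)$ of $d$ into $l$ positive parts, given by $i_n \coloneqq j_{n-1} - j_n + 1$; indeed $\sum_{n=1}^{l} i_n = (j_0 - j_l) + l = (d-l) + l = d$ and $i_n \geq 1$. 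Under this bijection the sum in \eqref{multiplicativity} becomes exactly the sum over compositions stated in the corollary. The $l=0$ term in the corollary contributes only in the trivial case $d=0$, where it matches $\binom{uv}{0} = 1$, consistent with the fact that \eqref{multiplicativity} starts the index at $l=1$.

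No genuine obstacle is expected: both identities follow as immediate specializations of Proposition \ref{ad-prod} combined with \eqref{lim11}. The only care needed is the index rewriting in the second identity, which is handled by the explicit bijection above.
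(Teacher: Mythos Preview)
Your proposal is correct and follows exactly the approach of the paper, which simply states ``Putting $\bq=(1,\dots,1)$ in formulas in Proposition \ref{ad-prod}, we obtain the claim.'' You have merely spelled out the details the paper leaves implicit, including the bijection $i_n = j_{n-1}-j_n+1$ between the index sets in \eqref{multiplicativity} and compositions of $d$.
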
 
\begin{proof} 
Putting ${\bq}=(1,\dots,1)$ in formulas in Proposition \ref{ad-prod}, we obtain the claim. 
\end{proof}

For $c,d\geq 0$, 
let ${\bp}$ and ${\bq}$ be tuples of length $c$ and $d$ respectively. 
For $m\geq 0$, 
let $D^m(c,d)$ be the number of terms of length $m$ in the expansion of $\bp\astb\bq$. 
For example, since $(p)\astb(q)=(p,q)+(q,p)+(pq)$, we have $D^1(1,1)=1$, $D^2(1,1)=2$ and 
$D^m(1,1)=0$ for $m\ge 3$. 
Note that $D^m(c,d)$ depends only on $c$ and $d$. 

\begin{cor}\label{cor:binomial2} 
For $w\in\C$ and $c,d\in\Z_{\geq 0}$, 
\begin{equation}\label{binom-prod}
\binom{w}{c}\binom{w}{d}=\sum_{m=0}^{c+d}D^m(c,d)\binom{w}{m}
\end{equation}
holds.
\end{cor}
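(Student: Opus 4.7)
The plan is to specialize the harmonic product formula for $G$ from Theorem \ref{thm:HP} at the tuples $\bp=(\underbrace{1,\ldots,1}_c)$ and $\bq=(\underbrace{1,\ldots,1}_d)$, and then invoke the special value $G^m(\underbrace{1,\ldots,1}_m;w)=\binom{w}{m}$ provided by \eqref{lim11} in Proposition \ref{binomial}.

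First I would analyze $\bp\astb\bq$ in $\frH_q$. By the inductive definition of $\astb$, every tuple appearing in the expansion is built by choosing, entry by entry, either an entry of $\bp$, an entry of $\bq$, or the product of two such entries. Since $1\cdot 1=1$, every tuple occurring in $\bp\astb\bq$ consists entirely of $1$'s, and the length of each such tuple ranges from $\max(c,d)$ to $c+d$. The defining recursion of $\astb$ is purely combinatorial in the shape of the tuples, so the number of length-$m$ terms depends only on $c$ and $d$, which is exactly the quantity $D^m(c,d)$ introduced before the corollary. This yields, in $\frH_q$, the identity
\[
(\underbrace{1,\ldots,1}_c)\astb(\underbrace{1,\ldots,1}_d)=\sum_{m=0}^{c+d}D^m(c,d)\,(\underbrace{1,\ldots,1}_m).
\]

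Second, I would apply the $\Q$-linear map $G(-;w)\colon\frH_q\to\C$ to both sides. On the left, Theorem \ref{thm:HP} factorises the image as $G(\bp;w)\,G(\bq;w)$, which \eqref{lim11} evaluates to $\binom{w}{c}\binom{w}{d}$. On the right, linearity together with \eqref{lim11} produces $\sum_{m=0}^{c+d}D^m(c,d)\binom{w}{m}$. Equating the two sides gives exactly \eqref{binom-prod}.

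There is no real obstacle in this argument; the only point worth a sentence of verification is that Theorem \ref{thm:HP} may be invoked at the all-ones tuples, which do not lie in the subalgebra $\frH^\circ_q$ used in the Hopf-algebra proof. This is harmless, however, since the harmonic relation $G(\bp;w)G(\bq;w)=G(\bp\astb\bq;w)$ extends from $\frH^\circ_q$ to all of $\frH_q$ by the analyticity of $G$ on $U_d\times\C$ established in Proposition \ref{prop3.2}.
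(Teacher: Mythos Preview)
Your proof is correct and follows essentially the same route as the paper: compute $(\underbrace{1,\ldots,1}_c)\astb(\underbrace{1,\ldots,1}_d)$ in $\frH_q$, apply the harmonic relation for $G$ from Theorem~\ref{thm:HP}, and evaluate via \eqref{lim11}. Your extra remark about the all-ones tuples not lying in $\frH^\circ_q$ is a valid point of care, though the paper already absorbs this into the statement of Theorem~\ref{thm:HP} (the analyticity extension from $\frH^\circ_q$ to $\frH_q$ is noted just before the proof there).
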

\begin{proof} 
If either $c$ or $d$ is zero, \eqref{binom-prod} is trivial. 
For $c,d>0$, since 
\[
(\underbrace{1,\dots,1}_{c})\astb(\underbrace{1,\dots,1}_{d})
=\sum_{m=0}^{c+d}D^m(c,d)(\underbrace{1,\dots,1}_{m}), 
\]
it follows from Theorem \ref{thm:HP} that 
\[G^c(1,\ldots,1;w)\,G^d(1,\ldots,1;w)=\sum_{m=0}^{c+d}D^m(c,d)G^m(1,\ldots,1;w). \]
Thus, by \eqref{lim11}, we have the claim. 
\end{proof}

\begin{prop} 
For $c,d,m\geq 1$, 
$D^m(c,d)$ is characterized by the recurrence formula  
\begin{equation}\label{recurrence-delanoy}
D^m(c,d)=D^{m-1}(c-1,d)+D^{m-1}(c,d-1)+D^{m-1}(c-1,d-1) 
\end{equation}
with 
\begin{equation}\label{recurrence-initial}
\left\{\begin{array}{l}
D^0(c,d)=\delta_{0c}\delta_{0d}\quad (c,d\geq 0), \\[2mm] 
D^m(c,0)=\delta_{mc}, \quad D^m(0,d)=\delta_{md}\quad (c,d,m\geq 0).
\end{array}\right. 
\end{equation}   
Equivalently, the generating function of $D^m(c,d)$ is as follows:
\begin{equation}\label{generating-fn}
\sum_{c,d,m\geq 0}D^m(c,d)x^cy^dz^m
=\frac{1}{1-xz-yz-xyz}. 
\end{equation}
\end{prop}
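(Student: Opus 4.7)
The plan is to establish the recurrence \eqref{recurrence-delanoy} directly from the defining recursion of $\astb$, verify the initial conditions \eqref{recurrence-initial} in the same manner, and then extract the generating function \eqref{generating-fn} by summing the recurrence.

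For the initial conditions I would use that $\varnothing\astb\bq=\bq$ is a single term of length $d$, and symmetrically $\bp\astb\varnothing=\bp$, which gives $D^m(0,d)=\delta_{md}$ and $D^m(c,0)=\delta_{mc}$ at once. For the remaining part of $D^0(c,d)=\delta_{0c}\delta_{0d}$, it suffices to note that when $c,d\ge 1$ every tuple appearing in $\bp\astb\bq$ has length at least $\max(c,d)\ge 1$ (an easy induction on $c+d$ from the defining relation), so $D^0(c,d)=0$ there.

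For the recurrence, fix $\bp=(p_1,\ldots,p_c)$ and $\bq=(q_1,\ldots,q_d)$ with $c,d\ge 1$. The defining relation from Definition \ref{defn:HP}(2),
\[
\bp\astb\bq=(p_1,\,\bp^{[1]}\astb\bq)+(q_1,\,\bp\astb\bq^{[1]})+(p_1q_1,\,\bp^{[1]}\astb\bq^{[1]}),
\]
partitions the length-$m$ terms on the left according to which summand they come from: a length-$m$ term of $(x_0,\bp'\astb\bq')$ corresponds bijectively to a length-$(m-1)$ term of $\bp'\astb\bq'$. Since the three subproducts involve pairs of tuples of lengths $(c-1,d)$, $(c,d-1)$, and $(c-1,d-1)$, and $D^{m-1}$ depends only on those lengths, counting yields exactly \eqref{recurrence-delanoy}. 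Together with the initial conditions, this uniquely determines $D^m(c,d)$ since each application of the recurrence strictly decreases $m$.

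For the generating function, set $F(x,y,z)\coloneqq\sum_{c,d,m\ge 0}D^m(c,d)\,x^cy^dz^m$ and split off the axes using the initial conditions:
\[
F=1+\frac{xz}{1-xz}+\frac{yz}{1-yz}+C,\qquad C\coloneqq\sum_{c,d,m\ge 1}D^m(c,d)\,x^cy^dz^m.
\]
Multiplying \eqref{recurrence-delanoy} by $x^cy^dz^m$ and summing over $c,d,m\ge 1$, after the index shifts $c\mapsto c-1$ and $d\mapsto d-1$, one obtains
\[
C=xz\Bigl(F-\tfrac{1}{1-xz}\Bigr)+yz\Bigl(F-\tfrac{1}{1-yz}\Bigr)+xyz\,F,
\]
the subtractions accounting for the axis slices $d=0$ and $c=0$ absent from $C$. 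Substituting back into the splitting of $F$ collapses everything to $F=1+(xz+yz+xyz)F$, whence \eqref{generating-fn}. The argument is essentially bookkeeping; the only place where care is needed is the last step, in tracking the axis contributions so that they cancel cleanly, but the verification is a one-line check.
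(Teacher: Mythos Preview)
Your proof is correct and follows essentially the same approach as the paper: the paper simply states that the recurrence and initial conditions ``are directly derived by the definition of $D^m(c,d)$'' and that the generating function ``is implied easily'' from them, while you supply the details of exactly that derivation. Your bookkeeping for the generating function (splitting off the axes and tracking the subtractions) is carried out correctly.
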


\begin{proof} The recurrence formula \eqref{recurrence-delanoy} and the initial condition \eqref{recurrence-initial} are directly derived by the definition of $D^m(c,d)$. 
The generating function \eqref{generating-fn} is implied easily from \eqref{recurrence-delanoy} and \eqref{recurrence-initial}. 
\end{proof}

\begin{rem}
The numbers 
\[
D(c,d)=\sum_{m=0}^{c+d} D^m(c,d)
\]
are known as Delannoy numbers, whose generating function is 
$\dfrac{1}{1-x-y-xy}$ ([A008288] in OEIS%
\footnote{The On-Line Encyclopedia of Integer Sequences, \url{https://oeis.org}}). 
\end{rem}
\begin{ex}
Since  
\[
(p_1,p_2)\astb (q)=(p_1, p_2,q)+(p_1, q,p_2)+(q,p_1,p_2)+(p_1, p_2q)+(p_1q,p_2), 
\]
we have $D^3(2,1)=3$,  $D^2(2,1)=2$,  $D^1(2,1)=D^{0}(2,1)=0$. 
Thus, by \eqref{binom-prod}, we have 
\[
\binom{w}{2}\binom{w}{1}=3\binom{w}{3}+2\binom{w}{2}. 
\]
\end{ex}

\end{document}